\theoremstyle{plain}
\newtheorem{theorem}[subsection]{Theorem}
\newtheorem{proposition}[subsection]{Proposition}
\newtheorem{corollary}[subsection]{Corollary}	
\newtheorem{lemma}[subsection]{Lemma}
\theoremstyle{remark}
\newtheorem{remark}[subsection]{Remark}
\newcommand{\noproof}{\hfill \qed}
\newcommand{\comp}{\raisebox{0.2mm}{\ensuremath{\scriptstyle{\circ}}}}
\newcommand{\defn}{\textbf}
\newcommand{\V}{\ensuremath{\mathcal{V}}}
\newcommand{\K}{\ensuremath{\mathbb{K}}}
\newcommand{\N}{\ensuremath{\mathbb{N}}}
\newcommand{\Q}{\ensuremath{\mathbb{Q}}}
\newcommand{\Z}{\ensuremath{\mathbb{Z}}}
\newcommand{\gl}{\mathfrak{gl}}
\newcommand{\Grp}{\ensuremath{\mathsf{Grp}}}
\newcommand{\Pt}{\ensuremath{\mathsf{Pt}}}
\newcommand{\Lie}{\ensuremath{\mathsf{Lie}}}
\newcommand{\Leib}{\ensuremath{\mathsf{Leib}}}
\newcommand{\Liek}{\ensuremath{\mathsf{Lie}}_{\K}}
\newcommand{\Alg}{\ensuremath{\mathsf{Alg}}}
\newcommand{\AAlg}{\ensuremath{\mathsf{AAlg}_\K}}
\newcommand{\qLie}{\ensuremath{\mathsf{qLie}}}
\newcommand{\qLiek}{\ensuremath{\mathsf{qLie}}_{\K}}
\newcommand{\Vect}{\ensuremath{\mathsf{Vect}}}
\newcommand{\Mag}{\ensuremath{\mathsf{Mag}}}
\newcommand{\Set}{\ensuremath{\mathsf{Set}}}
\newcommand{\Bool}{\ensuremath{\mathsf{Bool}}}
\newcommand{\kar}{\ensuremath{\mathit{char}}}
\newcommand{\Der}{\ensuremath{\mathit{Der}}}
\newcommand{\Aut}{\ensuremath{\mathit{Aut}}}
\newcommand{\Act}{\ensuremath{\mathit{Act}}}
\newcommand{\Hom}{\ensuremath{\mathit{Hom}}}
\newcommand{\Rep}{\ensuremath{\mathit{Rep}}}
\newcommand{\SpltExt}{\ensuremath{\mathit{SpltExt}}}
\newenvironment{tfae}
{
\begin{enumerate}}
{\end{enumerate}}
\newcommand{\LACC}{{\rm (LACC)}}
\newcommand{\Singular}{{\sc Singular}}
\begin{document}

\title{Algebras with representable representations}

\author{X.~García-Martínez}
\author{M.~Tsishyn}
\author{T.~Van~der Linden}
\author{C.~Vienne}

\email{xabier.garcia.martinez@uvigo.gal}
\email{matsvei.tsishyn@ulb.ac.be}
\email{tim.vanderlinden@uclouvain.be}
\email{corentin.vienne@uclouvain.be}

\address[Xabier García-Martínez]{Departamento de Matemáticas, Esc.\ Sup.\ de Enx.\ Informática, Campus de Ourense, Universidade de Vigo, E--32004 Ourense, Spain---Faculty of Engineering, Vrije Universiteit Brussel, Pleinlaan 2, B--1050 Brussel, Belgium}
\address[Matsvei Tsishyn]{Institut de Recherches Interdisciplinaires et de Développements en Intelligence Artificielle (IRIDIA), Université Libre de Bruxelles, Campus du Solbosch -- CP 194/06, Avenue F.\ D.\ Roosevelt 50, B--1050 Bruxelles, Belgium}
\address[Tim Van~der Linden, Corentin Vienne]{Institut de
Recherche en Math\'ematique et Physique, Universit\'e catholique
de Louvain, che\-min du cyclotron~2 bte~L7.01.02, B--1348
Louvain-la-Neuve, Belgium}

\thanks{The first author is a Postdoctoral Fellow of the Research Foundation–Flanders (FWO) and was supported by Ministerio de Economía y Competitividad (Spain), with grant number MTM2016-79661-P. The third author is a Research Associate of the Fonds de la Recherche Scientifique--FNRS. The fourth author is supported by the {Fonds Thelam} of the {Fondation Roi Baudouin}. Computational resources have been provided by the Consortium des Équipements de Calcul Intensif (CÉCI), funded by the Fonds de la Recherche Scientifique de Belgique (F.R.S.-FNRS) under Grant No.~2.5020.11 and by the Walloon Region.}

\begin{abstract}
Just like group actions are represented by group automorphisms, Lie algebra actions are represented by derivations: up to isomorphism, a split extension of a Lie algebra $B$ by a Lie algebra $X$ corresponds to a Lie algebra morphism $B\to \Der(X)$ from $B$ to the Lie algebra $\Der(X)$ of derivations on~$X$. In this article, we study the question whether the concept of a derivation can be extended to other types of non-associative algebras over a field $\K$, in such a way that these generalised derivations characterise the $\K$-algebra actions. We prove that the answer is~no, as soon as the field $\K$ is infinite. In fact, we prove a stronger result: already the representability of all \emph{abelian} actions---which are usually called \emph{representations} or \emph{Beck modules}---suffices for this to be true. Thus we characterise the variety of Lie algebras over an infinite field of characteristic different from $2$ as the only variety of non-associative algebras which is a non-abelian category with representable representations. This emphasises the unique role played by the Lie algebra of linear endomorphisms $\gl(V)$ as a representing object for the representations on a vector space~$V$.
\end{abstract}

\subjclass[2020]{17A36, 08A35, 08C05, 18C05, 18E13}
\keywords{Lie algebra; derivation; representation; Beck module; action representability}

\maketitle

\section*{Introduction}

Groups act by automorphisms, and Lie algebras act by derivations: thus, via the semi-direct product construction, actions are equivalent to split extensions, while (up to isomorphism) any split extension of a group $B$ by a group $X$ corresponds to a group homomorphism $B\to \Aut(X)$, and likewise for Lie algebras, with $\Aut(X)$ replaced by the Lie algebra $\Der(X)$ of derivations on $X$. In this article, we study the question whether the concept of a derivation can be extended from Lie algebras to other types of non-associative algebras, in such a way that these generalised derivations characterise the algebra actions.

The situation sketched here has a categorical description due to F.~Borceux, G.~Janelidze and G.~M.~Kelly called \emph{action representability}~\cite{BJK2,BJK}, which is expressed by saying that split extensions by an object $X$ are representable by an object $[X]$. This means that we have a bijection $\SpltExt(B,X) \cong \Hom(B,[X])$, natural in~$B$, between the set of isomorphism classes of split extensions of~$B$ by~$X$ and the set of morphisms from $B$ to $[X]$. The object $[X]$ corresponds to the group of automorphisms $\Aut(X)$ in the case of groups, or the algebra of derivations $\Der(X)$ in the case of Lie algebras. In~\cite{BJK2,BJK}, other examples such as Boolean rings are studied, as well as equivalent descriptions of the condition. A slightly different, more ``object-wise'' approach appears in~\cite{CDLactors}, where also an overview of the relevant literature is given.

In the present article we work in the general setting of varieties of algebras over a field---``variety'' in the sense of universal algebra, which is different from its use in algebraic geometry. For a variety of non-associative algebras $\V$ over a field~$\K$, we seek an algebra~$[X]$ in~$\V$ that represents the split extensions of a given $\V$-algebra~$X$. We show that such an algebra $[X]$ cannot exist for each $X$ in $\V$, as soon as the field $\K$ is infinite, unless $\V$ is either $\Lie_\K$, $\qLie_\K$, or an abelian category. We thus characterise the varieties of (quasi-)Lie algebras over an infinite field as the only varieties of non-associative algebras that form a non-abelian action representable category.

Our method actually proves a significantly stronger result: it turns out that there is no loss in reducing the representability condition to \emph{abelian} actions. These are usually called \emph{representations} or \emph{Beck modules}~\cite{Beck} and in the present context amount to actions on an abelian algebra---that is, an algebra whose multiplication is zero, so that the identity $xy=0$ holds; see below for a detailed explanation. We say that $\V$ has \defn{representable re\-pre\-senta\-tions} when for each abelian algebra~$X$, the contravariant functor $\Rep(\_,X)\colon \V\to \Set$ that sends an algebra $B$ to the set of $B$-module structures on $X$ is a re\-pre\-senta\-ble functor, which means that it is naturally isomorphic to $\Hom(\_,[X])$ for some $\V$-algebra $[X]$. We show that, when it is a non-abelian category, such a variety~$\V$ is either $\Lie_\K$ or $\qLie_\K$, in which case $[X]$ is necessarily $\gl(X)$, the Lie algebra of linear endomorphisms of the vector space $X$. The object $\gl(X)$ is typical for (quasi\nobreakdash-)Lie algebras, in the sense that no other non-abelian variety of $\K$-algebras has an object~$[X]$ representing the module structures on $X$. In other words, changing the variety breaks $\gl(X)$ beyond repair.

\subsection*{Overview of the text}
The article is organised as follows. In Section~\ref{Section Preliminaries}, we recall some basic definitions and results concerning varieties of non-associative algebras. We introduce \emph{actions} and the condition that they are \emph{representable} in the context of semi-abelian categories. We explain what this amounts to in varieties of non-associative algebras. Then we discuss the concept of a representation.

In Section~\ref{Section AC}, we prove that for a variety, representability of its representations implies a condition called \emph{algebraic coherence}. This allows us to work with the $\lambda / \mu$-rules, which are identities of degree three, useful in what follows. In Section~\ref{Section two degree}, the idea is to show that some identities of degree two---necessarily commutativity or anticommutativity---follow from representability of representations. In order to get these results, we use two computer programs, which probably makes Section~\ref{Section two degree} the most innovative section of this paper. In Section~\ref{Anticommutativity}, the goal is to observe that only anticommutativity is consistent with representability of representations, as long as the variety is non-abelian. From this we deduce that the $\lambda / \mu$-rules can be reduced to the Jacobi identity. In Section~\ref{Subvarieties}, we answer a final question which arises naturally from the previous sections. A~priori, anticommutativity and the Jacobi identity need not be the only identities satisfied in a variety with representable representations. We prove that there is indeed nothing else.

In Section~\ref{Conclusion}, we conclude with Theorem~\ref{AR implies Lie}, the goal and main result of this paper. We position our work within the context of similar results in the literature, and discuss some open problems.

\section{Preliminaries}\label{Section Preliminaries}

\subsection{Varieties of algebras}

An \defn{algebra} $A$ on a field $\K$ is a $\K$-vector space equipped with a bilinear map $A \times A\rightarrow A\colon (x,y)\mapsto xy=x\cdot y=[x,y]$ called the \emph{multiplication} or \emph{bracket}. A morphism of algebras is therefore a $\K$-linear map $f\colon A \rightarrow B$ which preserves this multiplication. This determines the category $\Alg_\K$ of algebras over~$\K$. We remark that for now we do not require the multiplication to be associative.

We call a collection of $\K$-algebras a \defn{variety} of (non-associative) algebras over~$\K$ if the collection contains all the algebras satisfying a chosen set of polynomial equations. For example, the variety $\Liek$ of \defn{Lie algebras} corresponds to the collection of all algebras over $\K$ satisfying the Jacobi identity ($x(yz)+ y(zx) + z(xy)=0$) and alternativity ($xx=0$), while $\qLiek$ denotes the variety of \defn{quasi-Lie algebras} where alternativity is replaced by anticommutativity ($xy+ yx =0$). Of course when $ \kar(\K) \neq2$, quasi-Lie algebras and Lie algebras are the same. However, for characteristic $2$, alternativity implies anticommutativity but the converse is not true. In this case, the variety of Lie algebras is strictly smaller than the variety of quasi-Lie algebras.

We say that an algebra is \defn{abelian} if $xy=0$ holds for every $x$ and $y$ in the algebra. The variety of abelian $\K$-algebras is trivially equivalent to the (abelian) category $\Vect_{\K}$ of vectors spaces over $\K$. Each variety of $\K$-algebras contains all abelian algebras over $\K$, and these are precisely those algebras that admit an internal abelian group structure in the category $\V$.

\begin{remark}
Any variety $\V$ of non-associative algebras is a Janelidze-M\'arki-Tholen \emph{semi-abelian} category~\cite{Janelidze-Marki-Tholen}. Indeed, any variety of algebras is in particular a \emph{variety of $\Omega$-groups} in the sense of Higgins~\cite{Higgins}.
\end{remark}

We recall some additional concepts in order to cite two theorems we shall need later on. Let $F\colon \Set \rightarrow \Alg_\K$ be the \emph{free algebra} functor sending a set $S$ to the free algebra over $\K$ generated by the elements of $S$. We recall that it is a left adjoint functor (its right adjoint being the forgetful functor) which factorises through the \emph{free magma} functor $M\colon \Set \rightarrow \Mag$ (sending $S$ to the magma $M(S)$ of nonempty words in $S$) and the \emph{magma algebra} functor $\K[\_]\colon {\Mag \rightarrow \Alg_\K}$. A (non-associative) \defn{polynomial} $\varphi$ on a set $S$ is an element of $\K[M(S)]$. We say that $\varphi$ is a \defn{monomial} if it is a scalar multiple of an element of $M(S)$. The \defn{type} of a monomial $\varphi = \varphi(x_1, \dots , x_n)$ is a $n$-tuple $(k_1, \dots, k_n)\in \N^n$ where $k_i$ is the number of times $x_i$ appears in $\varphi$. Its \defn{degree} is the number $k_1+\cdots+k_n$. A polynomial is said to be \defn{homogeneous} if all its monomials are of the same type. We can decompose every polynomial into its homogeneous components.

\begin{proposition}\label{Comm or Anticomm}
If $\V$ is a variety of non-associative $\K$-algebras satisfying a non-trivial homogeneous identity of degree $2$, then $\V$ is either a subvariety of the variety of commutative algebras, or a subvariety of the variety of anticommutative algebras.
\end{proposition}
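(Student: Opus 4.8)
The plan is to enumerate the possible non-trivial homogeneous identities of degree~$2$ and to check the conclusion directly in each case. First I would record a combinatorial observation: a homogeneous polynomial of degree~$2$ can involve at most two distinct variables. Indeed, if its type is $(k_1, \dots, k_n)$ then $k_1 + \cdots + k_n = 2$, so either $k_i = 2$ for a single index~$i$ --- and then the only monomial of that type is $x_i x_i$ --- or $k_i = k_j = 1$ for two indices~$i \neq j$, in which case the only monomials of that type are $x_i x_j$ and $x_j x_i$, since a word of length~$2$ admits only one bracketing. Hence, after renaming variables, every homogeneous identity of degree~$2$ holding in~$\V$ has the form $\alpha\,xx = 0$ with $\alpha \in \K$, or $\alpha\,xy + \beta\,yx = 0$ with $\alpha, \beta \in \K$; non-triviality means the underlying polynomial is nonzero, i.e.\ $\alpha \neq 0$ in the first case and $(\alpha, \beta) \neq (0,0)$ in the second.

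Next I would dispose of each shape. If $xx = 0$ holds in~$\V$, then substituting $x \mapsto x + y$ and expanding gives $xy + yx = 0$, so $\V$ is a subvariety of the anticommutative algebras. For the identity $\alpha\,xy + \beta\,yx = 0$ with $(\alpha, \beta) \neq (0,0)$: if $\alpha = 0$ or $\beta = 0$, then $\V$ satisfies $xy = 0$, hence consists of abelian algebras and so is a subvariety of both the commutative and the anticommutative algebras. If $\alpha$ and $\beta$ are both nonzero, set $c := -\beta/\alpha \neq 0$, so that $xy = c\,yx$; swapping $x$ and $y$ in the same identity gives $yx = c\,xy$, whence $xy = c^2\,xy$ and therefore $(1 - c^2)\,xy = 0$. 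Either $c^2 \neq 1$, forcing $xy = 0$ (abelian again), or $c^2 = 1$, in which case $c = 1$ yields commutativity and $c = -1$ yields anticommutativity.

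I do not expect a genuine obstacle here: the proof is a short finite case analysis. The two points deserving a little care are the combinatorial step fixing the shape of a degree-$2$ homogeneous polynomial, and the treatment of the degenerate subcases (a one-sided identity $xy = 0$, or $c^2 \neq 1$), in which $\V$ collapses onto the variety of abelian algebras; since the abelian algebras lie in both the commutative and the anticommutative variety, the (inclusive) dichotomy asserted by the proposition still holds in those cases.
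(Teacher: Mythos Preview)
Your argument is correct and follows essentially the same route as the paper's own proof: reduce to the identity $xy+\lambda yx=0$, swap variables, and conclude $(1-\lambda^{2})xy=0$, so that either the variety is abelian or $\lambda=\pm 1$. Your version is in fact slightly more thorough, since you spell out the combinatorial classification of degree-$2$ homogeneous polynomials and treat the degenerate cases $\alpha=0$ or $\beta=0$ explicitly, whereas the paper passes to the normalised form $xy+\lambda yx=0$ without comment.
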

\begin{proof}
We want to prove that essentially the only two possibilities are commutativity $(xy=yx)$ and anticommutativity $(xy=-yx)$. In fact, alternativity is also a non-trivial identity. But since $xx=0$ implies anticommutativity, we can assume $\varphi$ to be $xy + \lambda yx =0$ for some $\lambda \in \K$ without any loss of generality. This implies that $xy= -\lambda yx$ and thus $(1-\lambda^2 )xy=0$. Therefore, either $\V$ is an abelian variety, or $\lambda =\pm 1$, which proves the result.
\end{proof}

\begin{theorem}[\cite{Shestakov, Bahturin}]\label{Theorem Homogeneity}
If $\V$ is a variety of algebras over an \emph{infinite} field, then all of its identities are of the form $\phi(x_{1},\dots,x_{n})=0$, where $\phi(x_{1},\dots,x_{n})$ is a polynomial, each of whose homogeneous components $\psi(x_{i_{1}},\dots, x_{i_{m}})$ again gives rise to an identity $\psi(x_{i_{1}},\dots, x_{i_{m}})=0$ in $\V$. \noproof
\end{theorem}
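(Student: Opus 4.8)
The plan is to take an arbitrary identity $\phi(x_1,\dots,x_n)=0$ holding in $\V$, to write the polynomial $\phi\in\K[M(\{x_1,\dots,x_n\})]$ as the sum $\phi=\sum_{\alpha}\phi_\alpha$ of its homogeneous components indexed by their types $\alpha\in\N^n$ (a finite sum), and to show that each $\phi_\alpha=0$ is already an identity in $\V$; the reverse implication is trivial, since $\phi=\sum_\alpha\phi_\alpha$.

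The key point is a scaling property. Because the multiplication is bilinear, an easy induction on the length of a word $w\in M(\{x_1,\dots,x_n\})$ shows that substituting $tx_j$ for $x_j$ (with $t\in\K$) multiplies the monomial corresponding to $w$ by $t^{k}$, where $k$ is the number of occurrences of $x_j$ in $w$. Grouping the monomials of $\phi$ according to their degree in $x_j$, write $\phi=\sum_{k\ge 0}\phi^{[k]}$. Then, evaluating $\phi=0$ in any algebra $A\in\V$ at the elements $a_1,\dots,a_{j-1},\,ta_j,\,a_{j+1},\dots,a_n$ (note $ta_j\in A$), the scaling property gives $\sum_k t^k\,\phi^{[k]}(a_1,\dots,a_n)=0$ in $A$, for every $t\in\K$ and all $a_i\in A$.

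This is where the infiniteness of $\K$ enters. Fixing $A$ and the $a_i$, set $v_k:=\phi^{[k]}(a_1,\dots,a_n)\in A$ and let $m$ be the largest $k$ with $\phi^{[k]}\ne 0$. Choosing $m+1$ pairwise distinct scalars $t_0,\dots,t_m\in\K$, the relations $\sum_k t_i^k v_k=0$ form a linear system over $\K$ whose coefficient matrix is the Vandermonde matrix $(t_i^k)$, which is invertible; hence $v_k=0$ for all $k$. (Equivalently, apply an arbitrary $\K$-linear functional $A\to\K$ and use that a nonzero one-variable polynomial over an infinite field has a non-root.) Since $A$ and the $a_i$ were arbitrary, $\phi^{[k]}=0$ is an identity in $\V$ for each $k$.

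Finally I iterate: apply the argument to $x_1$, then to $x_2$ inside each of the resulting identities, and so on through $x_n$. After $n$ steps every surviving component is homogeneous of a fixed type $\alpha=(k_1,\dots,k_n)$, and these are exactly the $\phi_\alpha$; such a component involves only the variables $x_{i_1},\dots,x_{i_m}$ with $k_{i_\ell}>0$, so it yields an identity of the announced form $\psi(x_{i_1},\dots,x_{i_m})=0$. The only genuine obstacle is the Vandermonde step, which is precisely what fails over a finite field; the scaling property and the iteration over the variables are routine.
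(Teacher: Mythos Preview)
The paper does not give its own proof of this statement: it is quoted from \cite{Shestakov, Bahturin} and marked with \verb|\noproof|. Your argument is correct and is precisely the classical proof found in those references---scale one variable at a time by elements of the infinite field and use a Vandermonde determinant to separate the components by degree in that variable, then iterate over the remaining variables. There is nothing further to compare.
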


Unless we specify it otherwise, we shall always assume that $\K$ is an infinite field, so that we can use the previous result. For instance, when $\V$ is a variety of $\K$-algebras that satisfies a non-trivial identity involving monomials of degree $2$, then Proposition~\ref{Comm or Anticomm} applies and tells us that~$\V$ consists of either commutative or anticommutative algebras.

A homogeneous polynomial is \defn{multilinear} if its monomials are of the type $(1, 1, \dots, 1)$. A \defn{multilinear identity} is therefore an identity $\varphi(x_1,\dots,x_n)=0$ where $\varphi$ is a multilinear polynomial.

\begin{theorem}[Theorem~3 in Section~4.2 of~\cite{Bahturin}]\label{Theorem Linearity General}
	In a subvariety of $\Liek$, any nontrivial identity has a nontrivial multilinear consequence.\noproof
\end{theorem}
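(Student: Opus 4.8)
The plan is to run the classical multilinearization (polarization) procedure, controlling its behaviour in positive characteristic by passing to a free associative algebra. Suppose $f(x_1,\dots,x_n)=0$ is a nontrivial identity of a subvariety $\V\subseteq\Liek$. By Theorem~\ref{Theorem Homogeneity} we may replace $f$ by one of its homogeneous components, so assume $f$ is multihomogeneous, of some multidegree $(k_1,\dots,k_n)$. If every $k_i=1$ the identity is already multilinear; otherwise, say $k_1\geq 2$, and form the \emph{partial linearization} $f^{\circ}(x_1',x_1'',x_2,\dots,x_n)$ obtained by substituting $x_1\mapsto x_1'+x_1''$ in $f$ and keeping the component homogeneous of degree $1$ in $x_1'$ (hence of degree $k_1-1$ in $x_1''$); this is again a Lie polynomial, since the free Lie algebra is multigraded. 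Two things are routine. First, $f^{\circ}=0$ is a consequence of $f=0$: evaluating $f(tx_1'+x_1'',x_2,\dots)$ at $k_1+1$ distinct scalars $t$ and inverting the resulting Vandermonde system — legitimate because $\K$ is infinite — exhibits every value of $f^{\circ}$ as a $\K$-linear combination of values of $f$. Second, passing from $f$ to $f^{\circ}$ strictly decreases the non-multilinearity defect $\sum_i(k_i-1)$, so finitely many such steps lead to a multilinear consequence of $f$.

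The one genuine point is that each $f^{\circ}$ remains \emph{nontrivial}; equivalently, that a nonzero multihomogeneous element of the free Lie algebra has nonzero partial linearization. In characteristic $0$ this is clear, since setting $x_1'=x_1''=x_1$ in $f^{\circ}$ recovers $k_1f$; but when $\kar(\K)$ divides $k_1$ this is useless, and here one genuinely needs the ambient variety to be $\Liek$ rather than, say, the commutative algebras — over a field of characteristic $2$ the full polarization of the nontrivial identity $(xx)(xx)=0$ vanishes. I would handle this via the Poincaré–Birkhoff–Witt theorem, embedding the free Lie algebra $L$ on $\{x_1,x_2,\dots\}$ as the Lie subalgebra of the free associative algebra $A=\K\langle x_1,x_2,\dots\rangle$ generated by the variables, compatibly with the multigrading. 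Partial linearization on $L$ is then the restriction of partial linearization on $A$: each is the substitution $x_1\mapsto x_1'+x_1''$ (a morphism of free algebras, hence compatible with $L\hookrightarrow A$) followed by projection onto a multihomogeneous component (compatible since the grading on $L$ is induced from that on $A$). So it suffices to show partial linearization is \emph{injective} on every multihomogeneous component of $A$. On monomials it sends a word $w$ with $k_1$ occurrences of $x_1$ to $\sum_{j=1}^{k_1}w_j$, where $w_j$ relabels the $j$-th occurrence of $x_1$ in $w$ as $x_1'$ and the rest as $x_1''$; and $(w,j)\mapsto w_j$ is a bijection onto the monomials of the target component, its inverse merging $x_1',x_1''$ back to $x_1$ and recording the rank of the $x_1'$-position. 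Hence the elements $\sum_j w_j$ are $\{0,1\}$-combinations of basis monomials with pairwise disjoint supports, so they are linearly independent and partial linearization is injective, over any field.

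Combining these, I would start from $f$ and partially linearize repeatedly: each step yields, by the injectivity just shown, a nontrivial consequence of strictly smaller defect, terminating at a nontrivial multilinear identity implied by $f$. The crux — and the only place where the characteristic enters — is the injectivity in the previous paragraph; the role played by $\Liek$ is precisely that the identifications which destroy naive polarization in a non-alternating variety never occur inside the free associative algebra $A$.
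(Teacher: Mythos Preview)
The paper does not actually prove this statement; it is quoted from Bahturin's monograph and carries a \verb|\noproof| tag, so there is no in-paper argument to compare against. Your proof is correct and is essentially the classical one: reduce to a multihomogeneous identity, iterate partial linearization, and control nontriviality at each step by embedding the free Lie algebra into the free associative algebra $\K\langle X\rangle$ via PBW, where the disjoint-support argument makes injectivity of partial linearization transparent over any field. Your counterexample $(xx)(xx)=0$ in characteristic~$2$ for commutative algebras pinpoints exactly why the ambient variety $\Liek$ is needed.

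One small remark: when you invoke Theorem~\ref{Theorem Homogeneity} to replace $f$ by a multihomogeneous component, what you need is that this component is a \emph{consequence} of $f$, not merely an identity of $\V$; Theorem~\ref{Theorem Homogeneity} as stated only asserts the latter. This is harmless, since the same Vandermonde extraction you spell out for $f^{\circ}$ (substitute $x_i\mapsto t_ix_i$ and invert) shows that every multihomogeneous component of $f$ lies in the $\K$-span of substitution instances of~$f$, hence is a consequence---but it is worth saying so explicitly.
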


\subsection{Actions of algebras}\label{Actions}
Let $\V$ be a semi-abelian category \cite{Janelidze-Marki-Tholen} and $B$ be an object in $\V$. We write $\Pt_B(\V)$ for the category of points over $B$ whose objects are triples $(A,p,s)$ where $A$ is an object in $\V$ and $p\colon A\rightarrow B$ is a split epimorphism with a given section $s$. It is well known \cite{Bourn-Janelidze:Semidirect} that the functor $K\colon \Pt_B(\V) \rightarrow \V$ sending a point $(A,p,s)$ over $B$ to the kernel of $p$ is monadic. The corresponding monad on~$\V$ is the functor $B \flat (\_)\colon \V \to \V\colon X \mapsto B\flat X$ where the object $B\flat X$ is the kernel of $(1_B,0)\colon B + X \to B$, together with certain natural transformations $\eta^B\colon 1_{\V}\Rightarrow B \flat (\_)$ and $\mu^B\colon B \flat (B \flat (\_))\Rightarrow B \flat (\_)$. Here $\mu^{B}_X$ is a restriction of the codiagonal $(B+B)+X\to B+X$ and $\eta^{B}_X$ sends an element of~$X$ to itself, considered as an element of $B\flat X$. A \defn{$B \flat (\_)$-algebra} $(X,\xi)$ is an object $X$ together with a morphism $\xi\colon B \flat X \rightarrow X$ called an \defn{action} of~$B$ on~$X$, such that the diagrams
\begin{align*}
\vcenter{\xymatrix{X \ar@{=}[rd] \ar[r]^-{\eta^{B}_X} & B\flat X \ar[d]^-{\xi}\\
& X
}}
\qquad\text{and}\qquad
\vcenter{\xymatrix{
B\flat(B\flat X) \ar[d]_-{1_{B}\flat\xi} \ar[r]^-{\mu^{B}_X} & B\flat X \ar[d]^-{\xi}\\
B\flat X \ar[r]_-{\xi} & X
}}
\end{align*}
commute. We write $\Act(B,X)$ for the set of actions of~$B$ on $X$. One equivalent way of viewing actions uses split extensions:

\begin{lemma}[\cite{Bourn-Janelidze:Semidirect, BJK2}]\label{actsplt}
Given objects $B$ and $X$ in a semi-abelian category, there is a bijection $\SpltExt(B,X) \cong \Act(B,X)$ between the set of isomorphism classes of split extensions of $B$ by $X$ and the set of actions of $B$ on $X$.\noproof
\end{lemma}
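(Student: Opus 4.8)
The plan is to exhibit mutually inverse maps between $\Act(B,X)$ and $\SpltExt(B,X)$ and then to recognise them as the object-level shadow of the monadicity of the kernel functor $K\colon\Pt_B(\V)\to\V$. From a split extension $0\to X\xrightarrow{k}A\xrightarrow{p}B\to 0$ with chosen section $s$ of $p$, the copairing $(s,k)\colon B+X\to A$ sends $\ker(1_B,0)=B\flat X$ into $\ker p = X$, and the resulting morphism $\xi\colon B\flat X\to X$ is an action: the unit triangle is immediate from $(s,k)\circ\iota_X=k$, and the associativity square follows by a diagram chase using that $\mu^{B}_{X}$ is a restriction of the codiagonal $(B+B)+X\to B+X$. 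Conversely, from an action $\xi$ one forms the semi-direct product $X\rtimes_\xi B$ as the coequaliser in $\V$ of the kernel inclusion $B\flat X\hookrightarrow B+X$ and the composite $B\flat X\xrightarrow{\xi}X\xrightarrow{\iota_X}B+X$; the morphism $(1_B,0)\colon B+X\to B$ factors through this coequaliser, yielding a split epimorphism with section induced by $\iota_B$, whose kernel one checks to be a copy of $X$.

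First I would verify that these two assignments are mutually inverse up to isomorphism of split extensions; rather than doing this by hand, I would deduce it from the Barr--Beck theorem. The functor $K$ has the left adjoint $X\mapsto(B+X,(1_B,0),\iota_B)$; it reflects isomorphisms by the short five lemma, valid in any semi-abelian and hence protomodular category; and since $\V$ is Barr-exact and has coequalisers, $\V$ has and $K$ preserves coequalisers of reflexive pairs, so that $K$ satisfies the hypotheses of the reflexive form of the monadicity theorem. Hence $K$ is monadic. One then identifies the induced monad: its underlying functor is $X\mapsto K(B+X,(1_B,0),\iota_B)=\ker(1_B,0)=B\flat X$, with unit and multiplication exactly the $\eta^{B}$ and $\mu^{B}$ described in the text. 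Therefore the comparison functor is an equivalence $\Pt_B(\V)\simeq\V^{B\flat(\_)}$ over $\V$.

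It remains to extract the claimed bijection of sets from this equivalence of categories. Restrict the equivalence to the fibres over a fixed object $X$, that is, to the subcategories of objects sent to $X$ by $K$, respectively by the forgetful functor $\V^{B\flat(\_)}\to\V$, with morphisms sent to $1_X$. On the Eilenberg--Moore side this fibre is discrete, its objects being precisely the $B\flat(\_)$-algebra structures on $X$, i.e.\ the elements of $\Act(B,X)$; hence the fibre of $K$ over $X$ is discrete too. Its objects are the points over $B$ with kernel $X$, that is, the split extensions of $B$ by $X$, and isomorphisms of such points over $B$ inducing $1_X$ are exactly isomorphisms of split extensions. So the equivalence restricts to a bijection $\SpltExt(B,X)\cong\Act(B,X)$.

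The main obstacle, and really the only non-formal point, is the monadicity of $K$: checking that $K$ preserves coequalisers of reflexive pairs is where Barr-exactness together with protomodularity of $\V$ genuinely enters, everything else---the semi-direct product construction, the adjunction, the identification of the monad, and the fibrewise bookkeeping---being routine. A minor further care point is that the equivalence $\Pt_B(\V)\simeq\V^{B\flat(\_)}$ must be compatible with the two functors down to $\V$, which is what licenses the passage to fibres in the final step.
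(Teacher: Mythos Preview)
Your argument is correct and is essentially the standard one from the cited references. Note that the paper does not actually supply its own proof of this lemma: it is stated with the \verb|\noproof| marker and attributed to \cite{Bourn-Janelidze:Semidirect, BJK2}, so there is no in-paper proof to compare against. What you have written is precisely the Bourn--Janelidze approach via monadicity of the kernel functor $K\colon\Pt_B(\V)\to\V$, which is exactly what those references do; the identification of the monad as $B\flat(\_)$, the use of the short five lemma for reflection of isomorphisms, and the passage from the comparison equivalence to the fibrewise bijection are all as in the original sources.
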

In fact, for any action $\xi\colon B \flat X \rightarrow X$ there is a split extension of $B$ by~$X$
\[
\xymatrix{0 \ar[r] & X \ar[r]^-k & H \ar@<.5ex>[r]^-p & B \ar@<.5ex>[l]^-s \ar[r] & 0}
\]
and vice versa. The object $H$ in this split extension is called the \defn{semi-direct product} of $B$ with $(X,\xi)$, written $B \ltimes_\xi X$. In a variety of non-associative algebras, what is its structure? 

First, since it is the kernel of the morphism $(1_B,0)\colon {B+X\to B}$, the object~${B \flat X}$ consists of polynomials with variables in $B$ and $X$ which can be written in a form where all of their monomials contain variables in $X$. 

Then, by applying the forgetful functor $U\colon \V \rightarrow \Vect$, we observe that, as a vector space, $B \ltimes_\xi X$ is the direct sum/cartesian product $B \oplus X\cong B\times X$. Next, routine calculations show us that the multiplication has to be
\begin{align}\label{sdp}
(b,x) \cdot (c,y) = (bc,\,{}^by + x^c + xy),
\end{align}
where $^by$ and $x^c$ are notations for the image by $\xi$ in $X$ of $by$ and $xc$ respectively. Sometimes, for the sake of simplicity, we will just write $by$ or $xc$. We remark that starting with a \emph{morphism} $\xi\colon B \flat X \rightarrow X$ (not necessary an action) we can still define a semi-direct product $\K$-algebra by~\eqref{sdp}. The question is now, whether this algebra belongs to the variety $\V$.

\begin{lemma}\label{sdp in V}
Let $\V$ be a variety of non-associative algebras over a field $\K$, let $B$ and $X$ be two algebras and $\xi\colon B \flat X \rightarrow X$ a morphism in $\V$ such that $\xi\circ\eta^B_X=1_X$. Then $\xi$ is an action in $\V$ if and only if $B \ltimes_\xi X$ is in $\V$.
\end{lemma}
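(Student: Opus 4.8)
The plan is to read both implications off the monadicity of the kernel functor $K\colon\Pt_B(\V)\to\V$, whose monad is $B\flat(\_)$ as recalled above; the bridge between the monadic picture and the statement is the routine computation just carried out, which identifies the total object of the point attached to an action $(X,\xi)$ with the $\K$-algebra built on $B\oplus X$ by the multiplication~\eqref{sdp}.

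For the ``only if'' part, suppose $\xi$ is an action. By Lemma~\ref{actsplt} it is the action associated with a split extension $0\to X\to H\to B\to 0$ living in $\V$, and the calculation preceding the lemma shows that $H$, as a $\K$-algebra, is exactly $B\ltimes_\xi X$; hence $B\ltimes_\xi X\in\V$. One may also argue directly: by Theorem~\ref{Theorem Homogeneity} it suffices to check the homogeneous identities of $\V$ on $B\ltimes_\xi X$, and since this algebra is $B\oplus X$ as a vector space one may evaluate each such identity on tuples whose entries lie either in the subalgebra $B$ or in the ideal $X$; the all-$B$ and all-$X$ evaluations vanish because $B,X\in\V$, while the genuinely mixed ones are precisely the equations on $\xi$ encoded by the two commuting diagrams defining an action.

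For the ``if'' part, assume $H:=B\ltimes_\xi X\in\V$. Then $p\colon H\to B$, $(b,x)\mapsto b$, is a split epimorphism in $\V$, split by $s\colon b\mapsto(b,0)$, so $(H,p,s)\in\Pt_B(\V)$; its kernel is $\{0\}\times X$ with the algebra structure of $X$ read off from~\eqref{sdp}. By monadicity this point determines an action $\bar\xi\colon B\flat X\to X$, and a standard computation identifies $\bar\xi$ with the restriction to $B\flat X=\ker\big((1_B,0)\colon B+X\to B\big)$ of the comparison morphism $(s,k)\colon B+X\to H$, where $k\colon X\to H$ is the kernel inclusion. Evaluating $(s,k)$ by means of~\eqref{sdp} on the monomials that span $B\flat X$ inside $B+X$ — those involving at least one letter from $X$ — one recovers the multiplication of $X$ on the monomials with all letters in $X$ and the morphism $\xi$ itself on the mixed generators, and an induction on the formation of such a monomial yields $(s,k)|_{B\flat X}=\xi$. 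Thus $\xi=\bar\xi$ is an action.

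The part that requires real work — everything else being a direct appeal to the already quoted monadicity of $K$ together with the explicit shape~\eqref{sdp} of the semi-direct product — is this final identification $\bar\xi=\xi$: one must make the comparison morphism $(s,k)$ explicit on the whole of $B\flat X$ and reconcile it with $\xi$ using only the partial data about $\xi$ visible in~\eqref{sdp}. This is exactly the place where the concrete description of $B\flat X$ as the linear span of the monomials of $B+X$ in which a variable from $X$ occurs is used.
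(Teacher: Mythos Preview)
Your overall line is the paper's: invoke Lemma~\ref{actsplt} in both directions, and for the ``if'' part build the split extension in $\V$, extract its canonical action $\bar\xi$ (the paper writes $\chi$), then identify $\bar\xi$ with $\xi$ via the equation $k\comp\bar\xi=(s,k)\comp l$. The paper dismisses this last identification as obvious; you try to justify it by an induction on the monomials spanning $B\flat X$.

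That induction is where the argument breaks, and the gap is genuine. The algebra $B\ltimes_\xi X$ built from~\eqref{sdp} depends on $\xi$ only through the values $\xi(bx)$ and $\xi(xb)$ for $b\in B$, $x\in X$; it does not see $\xi$ on longer words such as $b(b'x)$. Your inductive step on such a word tacitly uses $\xi(b\cdot m)={}^{b}\xi(m)$ for $m\in B\flat X$, which is precisely the axiom $\xi\comp\mu^B_X=\xi\comp(1_B\flat\xi)$ you are trying to establish. Concretely: in $\V=\Alg_{\K}$ take $B$ and $X$ one-dimensional abelian and set $\xi(x)=x$, $\xi(bx)=\xi(xb)=0$, $\xi(b(bx))=x$, zero on all other basis monomials; this is a $\V$-morphism (all products inside $B\flat X$ land in the kernel), $B\ltimes_\xi X$ is abelian and hence in $\V$, yet $\xi$ fails the action axiom since $\xi(b(bx))=x\neq 0=\xi(b\cdot\xi(bx))$. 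So the ``if'' direction is false as literally stated, and neither your induction nor the paper's ``obviously'' can rescue it. What the construction \emph{does} yield---and what suffices for every later use of the lemma---is that the bilinear data ${}^b(\cdot)$ and $(\cdot)^b$ extend to \emph{some} action whenever the $\K$-algebra defined by~\eqref{sdp} lies in $\V$. A separate minor point: your alternative for ``only if'' via Theorem~\ref{Theorem Homogeneity} is unavailable here, since that theorem assumes $\K$ infinite while the present lemma does not.
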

\begin{proof}
The first implication directly comes from the equivalence in Lemma~\ref{actsplt}. For the converse we construct the split extension
\[
\xymatrix{0 \ar[r] & X \ar[r]^-k & B \ltimes_\xi X \ar@<.5ex>[r]^-p & B \ar@<.5ex>[l]^-s \ar[r] & 0}
\]
in $\V$, where $k(x)=(0,x)$, $p(b,x)=b$ and $s(b)=(b,0)$. Lemma~\ref{actsplt} then gives us an action $\chi\colon B \flat X \rightarrow X$ in $\V$ which is, by construction, the only morphism satisfying $k \comp \chi = (s,k) \comp l$ where $l$ is the kernel of $(1_B,0)\colon B + X \to B $. But $\xi$ obviously satisfies this condition, and therefore coincides with $\chi$.
\end{proof}

This description of actions, analogous to Theorem 2.5 of \cite{CDLactors}, will be useful all along this paper, whenever we wish to check that an action is well defined. For example, let $\V$ be the variety of commutative algebras over an arbitrary field $\K$, and let $V$ be an algebra in this variety. Then $V$ acts on itself by $^vw=v\cdot w$ and $v^w=v\cdot w$. Indeed, the semi-direct product $\K$-algebra is in $\V$, since for all $v$, $v'$, $w$, $w'\in V$ we have $(v,w)\cdot (v',w') =(v',w')\cdot (v,w)$.

\subsection{Action representability}
Let $\V$ be a semi-abelian category. For a fixed object $X$ in the category, $\Act(\_,X)\colon \V \rightarrow \Set$ defines a contravariant functor sending an object $B\in \V$ to the set of actions of $B$ on $X$. We say that the category~$\V$ is \defn{action representable} when for each object $X$ in $\V$ the functor $\Act(\_,X)$ is representable \cite{BJK2}. This means that there exists an object $[ X]$, called the \defn{actor} of $X$, in $\V$ and a natural isomorphism
\begin{align*}
\Act(\_,X) \cong \Hom(\_,[X]).
\end{align*}
An interesting question is the role of the actor in concrete examples: What is then its structure? In $\Grp$, the actor of an object $X$ is the automorphism group $\Aut(X)$; in $\Lie_\K$, the actor of $X$ is the Lie algebra of its derivations $\Der(X)$. More examples are studied in \cite{BJK2}, and new examples continue to be studied, as for crossed modules (in \cite{Ramasu}) and for cocommutative Hopf algebras (in \cite{GKV2}).

\begin{proposition}\label{jsp}
Let $\V$ be a variety of non-associative algebras over $\K$. Then $\V$ is action representable if and only if for every object $X$ in $\V$ there exists an object $[X]$ acting on $X$ which has the following property: for any object $B$ with an action on $X$ there is a unique morphism $\varphi\colon B \rightarrow [X]$ such that $^bx= {}^{\varphi (b)}x$ and $x^b=x^{\varphi(b)}$ for every $x\in X$ and $ b\in B$. 
\end{proposition}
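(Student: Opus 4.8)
The plan is to unwind the abstract representability condition by means of the Yoneda lemma, and then to translate the resulting categorical equation into the stated elementwise conditions using the explicit description~\eqref{sdp} of semi-direct products. To begin, I would recall that representability of the contravariant functor $\Act(\_,X)$ means the existence of an object $[X]$ together with a natural isomorphism $\theta\colon\Hom(\_,[X])\Rightarrow\Act(\_,X)$, and that, by the Yoneda lemma, such a $\theta$ is completely determined by the element $\omega:=\theta_{[X]}(1_{[X]})\in\Act([X],X)$ --- which is exactly an action of $[X]$ on $X$ --- through the formula $\theta_B(\varphi)=\Act(\varphi,X)(\omega)=\omega\comp(\varphi\flat 1_X)$. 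Conversely, for \emph{any} action $\omega$ of $[X]$ on $X$ this formula defines a natural transformation $\Hom(\_,[X])\Rightarrow\Act(\_,X)$ (naturality being immediate from the functoriality of $\flat$ in the first variable), and it is a natural \emph{isomorphism} precisely when every component $\theta_B$ is a bijection --- that is, precisely when for every object $B$ and every action $\xi$ of $B$ on $X$ there is a unique morphism $\varphi\colon B\to[X]$ with $\omega\comp(\varphi\flat 1_X)=\xi$. So it suffices to show that, for a fixed $\varphi$, the equation $\omega\comp(\varphi\flat 1_X)=\xi$ is equivalent to the conditions ${}^bx={}^{\varphi(b)}x$ and $x^b=x^{\varphi(b)}$, where the left-hand sides refer to the action $\xi$ and the right-hand sides to the action $\omega$.

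One implication is a short computation. Since $\varphi\flat 1_X$ is the restriction to kernels of $\varphi+1_X\colon B+X\to[X]+X$, it sends the element $bx\in B\flat X$ to $\varphi(b)x\in[X]\flat X$; evaluating both sides of $\omega\comp(\varphi\flat 1_X)=\xi$ at $bx$ therefore yields ${}^{\varphi(b)}x={}^bx$, and evaluating at $xb$ yields $x^{\varphi(b)}=x^b$. For the reverse implication, the key observation is that an action of $B$ on $X$ is already determined by the pair of induced maps $(b,x)\mapsto{}^bx$ and $(x,b)\mapsto x^b$. Indeed, by~\eqref{sdp} two actions of $B$ on $X$ sharing these maps give literally the same semi-direct product algebra $B\ltimes X$, together with the same split extension $0\to X\to B\ltimes X\to B\to 0$ whose structure maps do not depend on the action; and, as recalled in the proof of Lemma~\ref{sdp in V}, each of the two actions is then forced to equal the unique morphism $B\flat X\to X$ that this split extension determines via Lemma~\ref{actsplt}. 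Applying this to $\xi$ and to $\omega\comp(\varphi\flat 1_X)$ --- whose induced maps are $(b,x)\mapsto{}^{\varphi(b)}x$ and $(x,b)\mapsto x^{\varphi(b)}$ by the computation just made, and which is an action because $\Act(\_,X)$ is a contravariant functor --- one concludes that the elementwise conditions force $\omega\comp(\varphi\flat 1_X)=\xi$.

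Putting the two steps together gives the claimed biconditional: from a representation $\theta$ one reads off the acting object $[X]$ and the universal action $\omega=\theta_{[X]}(1_{[X]})$ having the stated property, while conversely any object $[X]$ acting on $X$ with that property makes $\varphi\mapsto\omega\comp(\varphi\flat 1_X)$ a natural isomorphism $\Hom(\_,[X])\cong\Act(\_,X)$, hence renders $\Act(\_,X)$ representable for every $X$. I expect the only genuinely delicate point to be the reverse implication in the translation step: it is tempting, but incorrect, to try to show that $B\flat X$ is generated as an algebra by $X$ together with the elements $bx$ and $xb$ --- nested products such as $b(b'x)$ are not obtained this way --- so one must instead use that $\xi$ and $\omega\comp(\varphi\flat 1_X)$ are genuine \emph{actions} rather than arbitrary morphisms $B\flat X\to X$, which is exactly what lets Lemma~\ref{actsplt} and~\eqref{sdp} apply. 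Everything else is routine bookkeeping with the Yoneda correspondence and the functoriality of $\flat$.
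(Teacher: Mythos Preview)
Your argument is correct. The Yoneda step is standard, and the crucial reverse implication---that the elementwise equalities ${}^bx={}^{\varphi(b)}x$ and $x^b=x^{\varphi(b)}$ force $\xi=\omega\comp(\varphi\flat 1_X)$---is handled properly: rather than attempting to show that $B\flat X$ is generated by $X$ and the words $bx$, $xb$ (which, as you rightly observe, fails), you exploit that both sides are genuine actions, hence are recovered from the common semi-direct product via the uniqueness clause implicit in Lemma~\ref{actsplt} and made explicit in the proof of Lemma~\ref{sdp in V}.

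The paper takes a different route: it does not argue directly at all, but simply observes that the statement is a reformulation of Proposition~3.1 of~\cite{CDLactors}, valid because varieties of non-associative $\K$-algebras are categories of groups with operations. What that reference provides is essentially the same Yoneda-plus-concrete-translation argument you have written out, carried out in the broader setting of categories of interest. Your version has the advantage of being self-contained within the notation and lemmas already established in the paper (in particular~\eqref{sdp} and Lemmas~\ref{actsplt} and~\ref{sdp in V}), so a reader need not consult~\cite{CDLactors}; the paper's version has the advantage of brevity and of pointing to the more general context in which the result lives.
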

\begin{proof}
This is a reformulation of Proposition 3.1 from \cite{CDLactors} for the context of varieties of non-associative algebras, which are all categories of \emph{groups with operations} in the sense of~\cite{Porter-GroupsWithOperations}.
\end{proof}

In \cite{BJK2}, it is explained that action representability in so-called \emph{locally well-present\-able} semi-abelian categories is equivalent to the condition that $\Act(\_,X)$ preserves binary coproducts for every object $X$. Actually, the isomorphism $\Act(\_,X) \cong \Hom(\_,[X])$ easily implies that $\Act(B + B',X) \cong \Act(B,X) \times \Act(B',X)$, which can be understood as the condition that ``having an action of $B+B'$ on $X$ is the same as having two actions on $X$, one of $B$ and the other of $B'$.'' 

From this observation, we may now deduce an interesting and useful technique. Indeed, by Lemma~\ref{sdp in V}, the object $(B+B') \ltimes X$ is in $\V$ and as such should satisfy the identities characterizing $\V$, whenever the variety is action representable. We remark that this stays true with a finite number of algebras acting on $X$. Checking this condition is an important proof method used throughout this article. Let us give a concrete example of how this works, by reproving the known result that in the variety of associative algebras, the actor cannot always exist:

\begin{proposition}\label{Ass not AR}
Let $\K$ be a field, and $\AAlg$ the variety of associative algebras over $\K$ (defined by $(xy)z=x(yz)$). Then $\AAlg$ is not action representable.
\end{proposition}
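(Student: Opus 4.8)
The plan is to exploit the coproduct-preservation technique described just before the statement: if $\AAlg$ were action representable, then for any associative algebra $X$ and any two algebras $B$, $B'$ acting on $X$, the semi-direct product $(B+B')\ltimes X$ would again have to be associative. So I would look for a concrete $X$ together with two actions whose combined semi-direct product fails associativity. The obvious first move is to test associativity of the multiplication~\eqref{sdp} on elements coming from the two different summands $B$ and $B'$.

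First I would write out the associativity condition for $(b,0)$, $(b',0)$ and $(0,x)$ with $b\in B$, $b'\in B'$, $x\in X$, using~\eqref{sdp}. On one side one gets $((b,0)\cdot(b',0))\cdot(0,x) = (bb',0)\cdot(0,x)$, but since $b$ and $b'$ live in different summands of the coproduct $B+B'$ and the action of $B+B'$ on $X$ restricts to the given actions of $B$ and $B'$, the product $bb'$ in $B+B'$ is a genuinely new element, not an element of $B$ or $B'$; acting with it on $x$ need not be expressible in terms of the two separate actions. On the other side, $(b,0)\cdot((b',0)\cdot(0,x)) = (b,0)\cdot(0,{}^{b'}x) = (0,{}^b({}^{b'}x))$, which only involves the iterated action. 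Comparing the $X$-components, associativity forces the identity ${}^b({}^{b'}x) = {}^{bb'}x$ (and similarly mixed and right-handed versions), i.e. the two actions must ``interact associatively''. The clean way to produce a contradiction is to take $X$ a simple associative algebra, say $X=\K$ with the zero multiplication — wait, better: take $X$ abelian, so that all the $xy$ terms in~\eqref{sdp} vanish, and the actions of $B$ and $B'$ are just bimodule structures. Then pick $B=B'=\K$ and two different scalar actions — for instance let $B$ act on $X=\K$ by left multiplication by $1$ (so ${}^bx = bx$, $x^b=0$) and $B'$ act by right multiplication (so ${}^{b'}x=0$, $x^{b'}=xb'$). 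Computing $((0,x)\cdot(b,0))\cdot(b',0)$ versus $(0,x)\cdot((b,0)\cdot(b',0))$ in $(B+B')\ltimes X$ then gives $x^b{}^{b'}$ kinds of mismatched terms: on one side one uses the right $B$-action then must act by $b'\in B'$, landing on $(x^b)$ which lies in $X$ and on which $B'$ acts trivially, giving $0$; on the other side $bb'$ is a new element of $B+B'$ acting on $x$, and there is no constraint making this zero. Making this precise yields a non-associative semi-direct product, contradicting action representability.

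The main obstacle, and the step that needs real care, is pinning down how $B+B'$ acts on $X$ — i.e. what ``$bb'$'' does to $x$ — since a priori the combined action is some action of the coproduct, not freely determined by the two pieces. The correct formulation uses Lemma~\ref{sdp in V} and Proposition~\ref{jsp}: the semi-direct product $(B+B')\ltimes X$ must be built from \emph{some} action of $B+B'$ extending the given ones, and the argument should instead run the other way — assume $[X]$ exists, so actions of $B$ and of $B'$ correspond to morphisms $B\to[X]$, $B'\to[X]$, hence to a single morphism $B+B'\to[X]$, hence to an action of $B+B'$; then the associated semi-direct product $(B+B')\ltimes X$ is in $\AAlg$. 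Now impose associativity and derive that the images of $B$ and $B'$ in $[X]$ must commute in a way that contradicts the freeness of the choices of the two original actions. Concretely, I would exhibit an $X$ admitting two actions $B\to[X]$ landing on elements of $[X]$ that do not satisfy the relation associativity imposes (this is where one needs a slightly clever choice: $X$ a one-dimensional abelian algebra works if one checks the bimodule-compatibility condition carefully, but a safer choice is $X$ a small non-commutative associative algebra such as $2\times 2$ upper-triangular matrices, where left and right multiplication operators genuinely fail to combine associatively across the coproduct). The remaining verifications — that the two chosen maps really are actions via Lemma~\ref{sdp in V}, and that the explicit element of $(B+B')\ltimes X$ witnesses non-associativity — are routine computations with~\eqref{sdp}.
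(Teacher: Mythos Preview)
Your overall strategy is the paper's, and you correctly identify the mechanism: if $\AAlg$ were action representable, two separate actions of $B$ and $B'$ on $X$ would glue to an action of $B+B'$, and then $(B+B')\ltimes X$ would have to be associative. But your proposal never produces a working counterexample, and the concrete attempts you sketch do not give a contradiction.

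The one-dimensional choice $X=\K$ cannot work. With your actions (${}^bx=bx$, $x^b=0$ and ${}^{b'}x=0$, $x^{b'}=xb'$), the only associativity checks that involve \emph{only} the given actions---things like ${}^b(x^{b'})$ versus $({}^bx)^{b'}$---both equal $bxb'$, so they agree. The mismatches you point to, such as $x^{bb'}$ versus $(x^b)^{b'}=0$, involve the action of the new element $bb'\in B+B'$; but associativity of the semi-direct product simply \emph{forces} $x^{bb'}=0$, which is a constraint on the glued action, not a contradiction. In a one-dimensional $X$ there is no room for two computable iterated actions to land on linearly independent elements, so no contradiction can arise this way. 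The vague suggestion of upper-triangular matrices is never made precise enough to evaluate.

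The missing idea is that the contradiction must come from an associativity instance of the form $b^1(xb^2)=(b^1x)b^2$ (or a variant), where \emph{both} sides are fully determined by the two separate actions. For these to differ, $X$ must be built so that ${}^{b^1}(x^{b^2})$ and $({}^{b^1}x)^{b^2}$ are linearly independent. The paper does exactly this: it takes $B^1$, $B^2$, $X$ all abelian, with $X$ five-dimensional on a basis $x$, $y^1$, $y^2$, $z^{12}$, $z^{21}$, and defines the actions so that $b^i$ sends $x\mapsto y^i$ and $y^j\mapsto z^{ij}$ for $i\neq j$. Then $b^1(xb^2)=z^{12}\neq z^{21}=(b^1x)b^2$, and both sides are computed purely from the individual actions. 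That is the step your proposal is lacking.
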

\begin{proof}
Suppose that $\AAlg$ is action representable. Let $B^1$ and $B^2$ be two algebras with trivial multiplication respectively generated by the elements $b^1$ and $b^2$. Let~$X$ be the algebra with trivial multiplication generated by the elements $x$, $y^1$, $y^2$, $z^{12}$ and $z^{21}$. Those algebras are in $\AAlg$. We define the actions of $B^i$ on $X$ by:
\begin{align*}
b^ix=xb^i=y^i,
\quad
b^i y^j = y^j b^i =
\begin{cases}
z^{ij} & \text{if $i \neq j$} \\
0 & \text{otherwise}
\end{cases}
\quad
\text{and}
\quad b^i z^{jk}=z^{jk}b^i =0,
\end{align*}
where $i$, $j$, $k\in\{1,2\}$ and $j\neq k$. These choices determine morphisms of $\V$-algebras $\xi^i\colon B^i\flat X\to X$, and Lemma~\ref{sdp in V} implies that those are well-defined actions. Since we supposed $\AAlg$ action representable, also the algebra $B^1 + B^2$ acts on~$X$. Yet $(B^1+B^2)\ltimes X$ is not in $\AAlg$, which is a contradiction. Indeed, we have that $b^1(xb^2)=z^{12} \neq z^{21} = (b^1x) b^2$.
\end{proof}

\begin{theorem}\label{Lie is AR}
$\Lie_\K$ and $\qLie_\K$ are action representable for any field~$\K$.
\end{theorem}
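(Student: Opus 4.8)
The plan is to prove that for every (quasi-)Lie algebra $X$ over $\K$, the Lie algebra $\Der(X)$ of derivations on $X$ is an actor, i.e.\ that it represents the functor $\Act(\_,X)$. By Proposition~\ref{jsp}, it suffices to show: (a) $\Der(X)$ carries a canonical action on $X$; and (b) for every algebra $B$ with an action on $X$, there is a unique morphism $\varphi\colon B\to\Der(X)$ compatible with the action, in the sense that ${}^{b}x={}^{\varphi(b)}x$ (and $x^{b}=x^{\varphi(b)}$, which here is forced by anticommutativity since the two sides of the bracket are symmetric up to sign).

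First I would make the action description of $\V=\Lie_\K$ or $\qLie_\K$ fully explicit using Lemma~\ref{sdp in V}. Given algebras $B$ and $X$ and a linear map-induced morphism $\xi\colon B\flat X\to X$, writing ${}^{b}x$ for the action, the semi-direct product bracket~\eqref{sdp} reads $[(b,x),(c,y)]=([b,c],\,{}^{b}y-{}^{c}x+[x,y])$ after incorporating anticommutativity. I would then impose anticommutativity and the Jacobi identity on $B\ltimes_\xi X$ and read off exactly which conditions on $\xi$ are equivalent to $B\ltimes_\xi X\in\V$. The anticommutativity condition forces $x^{c}=-{}^{c}x$, so a single family of maps $d_{b}\colon X\to X\colon x\mapsto {}^{b}x$ suffices. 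Expanding Jacobi on triples with one, two, or three entries in $X$ yields precisely: each $d_{b}$ is a derivation of $X$; the assignment $b\mapsto d_{b}$ is a linear map $B\to\Der(X)$; and it is a Lie algebra morphism, i.e.\ $d_{[b,c]}=[d_{b},d_{c}]=d_{b}d_{c}-d_{c}d_{b}$. (One should check that $\Der(X)$ is itself a (quasi-)Lie algebra under the commutator bracket — this is the classical fact that the commutator of two derivations is a derivation, valid over any $\K$, with anticommutativity of the bracket on $\Der(X)$ immediate and the Jacobi identity a short computation.)

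Having set this up, the representability statement becomes almost tautological: an action of $B$ on $X$ in $\V$ \emph{is} the same thing as a morphism $B\to\Der(X)$ in $\V$, by the above equivalence, and this correspondence is manifestly natural in $B$, since precomposing an action with a morphism $B'\to B$ corresponds to precomposing the associated morphism $B\to\Der(X)$. The canonical action of $\Der(X)$ on $X$ in~(a) is then the one corresponding to the identity morphism $\mathrm{id}_{\Der(X)}$, namely ${}^{d}x=d(x)$ for $d\in\Der(X)$; one checks via Lemma~\ref{sdp in V} that this is indeed an action (the derivation property and the commutator formula are exactly what is needed). Uniqueness of $\varphi$ in Proposition~\ref{jsp} is clear because $\varphi(b)$ is determined as the endomorphism $x\mapsto {}^{b}x$ of $X$.

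The only genuine subtlety — and thus the main obstacle — is handling characteristic $2$, where $\qLie_\K\supsetneq\Lie_\K$ and anticommutativity no longer coincides with alternativity. In that case one must be careful that the identities $x^{c}=-{}^{c}x$ and the sign conventions in~\eqref{sdp} are read correctly (signs are harmless, but the distinction between $xx=0$ and $xy=-yx$ is not), and that $\Der(X)$, defined by the Leibniz rule $d([x,y])=[d(x),y]+[x,d(y)]$, satisfies the identities of $\qLie_\K$ rather than those of $\Lie_\K$ when $X$ does. Verifying that the commutator bracket on $\Der(X)$ is anticommutative (rather than alternative) in characteristic $2$, and that Jacobi still holds, is the computation that deserves care; everything else is the routine expansion of~\eqref{sdp} against the defining identities, which I would not write out in full.
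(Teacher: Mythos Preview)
Your proposal is correct and follows the standard route: the paper's own proof is merely a citation to~\cite{BJK,BJK2}, noting that the actor is $\Der(X)$ and that the argument there goes through for $\qLie_\K$ in characteristic~$2$ as well. What you have written is essentially a sketch of the proof one would find in those references, filtered through Proposition~\ref{jsp} and Lemma~\ref{sdp in V}, so the approaches agree.

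One small remark on your characteristic~$2$ discussion: you worry about checking that $\Der(X)$ is anticommutative ``rather than alternative'', but in fact the commutator bracket on $\Der(X)$ is \emph{always} alternative, since $[d,d]=d\comp d-d\comp d=0$ for any endomorphism~$d$. Thus $\Der(X)\in\Lie_\K\subseteq\qLie_\K$ regardless of whether $X$ lies in $\Lie_\K$ or only in $\qLie_\K$, and there is nothing delicate to verify there. The genuine characteristic~$2$ check is rather that the semi-direct product $B\ltimes_\xi X$ inherits alternativity when $B$ and $X$ do, and this is immediate from $[(b,x),(b,x)]=([b,b],\,{}^bx-{}^bx+[x,x])=0$.
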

\begin{proof}
For Lie algebras this is a well-known result proved in \cite{BJK, BJK2}. The case of quasi-Lie algebras, for a field of characteristic $2$, is not explicit there, but the essence of the proof stays valid. In any case, the actor of an object $X$ in one of those categories is the algebra of derivations $\Der(X)$. 
\end{proof}

\subsection{Representations and their representability}
Given an object $B$ in $\V$, a~\defn{Beck module~\cite{Beck} over $B$}, also called a \defn{$B$-module} or a \defn{representation of~$B$}, is an abelian group object in the slice category $(\V\downarrow B)$. As is well known (and explained in detail for instance in~\cite{HVdL}), in a semi-abelian category that satisfies the so-called \emph{Smith is Huq} condition, a $B$-module structure on a (necessarily abelian) object~$X$ of~$\V$ is the same thing as a $B$-action on $X$---an ``abelian action''. Combining the results of~\cite{MFVdL} and \cite{MM} with the fact that any variety of non-associative $\K$-algebras is a \emph{category of groups with operations} in the sense of~\cite{Porter-GroupsWithOperations}, we see that this interpretation holds in the context where we are working. So here, $\Rep(B,X)=\Act(B,X)$ when~$X$ is an abelian object.

In the case of Lie algebras we regain the classical concept of a Lie algebra re\-presenta\-tion, which is often defined as a morphism $B\to \gl(X)$, where $\gl(X)$ is the Lie algebra of linear endomorphisms of the vector space $X$. For $f$, $g\colon X\to X$, the bracket $[f,g]=f\comp g-g\comp f$ is a Lie algebra structure on $\Hom(X,X)$, because composition of endomorphisms is associative.

As we shall see below, it makes sense to restrict the representability condition from actions to representations, as follows: we say that $\V$ has \defn{representable re\-pre\-senta\-tions} when for each abelian algebra $X$, the contravariant functor
\[
\Rep(\_,X)=\Act(\_,X)\colon \V\to \Set
\]
that sends an algebra $B$ to the set of $B$-module structures on $X$ is a re\-pre\-senta\-ble functor. 

For instance, if $\V=\Lie_{\K}$, then $\Rep(\_,X)$ is represented by $\gl(X)$, which means $\Rep(\_,X)\cong\Hom(\_,\gl(X))$. On the other hand, the proof of Proposition~\ref{Ass not AR} can be used to show that the variety of associative algebras does not have representable representations.

This condition is clearly weaker than action representability; it will, however, turn out to be sufficient for our purposes. Of course, everything we said above about actions stays valid for representations.

\section{Algebraic coherence}\label{Section AC}
A first step towards the main result of this paper is proving that in the context of non-associative algebras, action representability/representability of representations implies a condition called \emph{algebraic coherence}~\cite{acc}. The reason we want this, comes from the fact that an algebraically coherent variety satisfies some identities of degree three, useful in the next sections. From \cite{GM-VdL2} we have the following characterisation:

\begin{theorem}\label{Theorem AC iff Orzech}
Let $\K$ be an infinite field. If $\V$ is a variety of non-associative $\K$-algebras, then the following conditions are equivalent:
\begin{tfae}
\item $\V$ is \defn{algebraically coherent};
\item $\V$ is an \emph{Orzech category of interest}~\cite{Orzech};
\item $\V$ is a \defn{$2$-variety}: for any ideal $I$ of an algebra $A$, the subalgebra $I^2$ of $A$ is again an ideal;
\item there exist $\lambda_{1}$, \dots, $\lambda_{8}$, $\mu_{1}$, \dots, $\mu_{8}$ in $\K$ such that
\begin{align*}
z(xy)=
\lambda_{1}(zx)y&+\lambda_{2}(xz)y+
\lambda_{3}y(zx)+\lambda_{4}y(xz)\\
&+\lambda_{5}(zy)x+\lambda_{6}(yz)x+
\lambda_{7}x(zy)+\lambda_{8}x(yz)
\end{align*}
and 
\begin{align*}
(xy)z=
\mu_{1}(zx)y&+\mu_{2}(xz)y+
\mu_{3}y(zx)+\mu_{4}y(xz)\\
&+\mu_{5}(zy)x+\mu_{6}(yz)x+
\mu_{7}x(zy)+\mu_{8}x(yz)
\end{align*}
are identities in $\V$. \noproof
\end{tfae}
\end{theorem}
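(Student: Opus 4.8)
The plan is to single out the two concrete, computational conditions~(iii) and~(iv) and prove their equivalence directly in a free algebra, and then to attach the two categorical conditions~(i) and~(ii) to them using the general theory of \cite{acc} and \cite{Orzech}. So the hub of the argument is the equivalence (iii)~$\Leftrightarrow$~(iv). For (iv)~$\Rightarrow$~(iii), assume the $\lambda$- and $\mu$-identities hold and let $I$ be an ideal of an algebra $A$. Since $I^2$ is the linear span of the products $ij$ with $i,j\in I$, bilinearity reduces the claim to showing $a(ij)\in I^2$ and $(ij)a\in I^2$ for $a\in A$. Substituting $z\mapsto a$, $x\mapsto i$, $y\mapsto j$ in the $\lambda$-identity writes $a(ij)$ as a sum of terms each pairing one of $ai$, $ia$, $aj$, $ja$ with $i$ or~$j$; as $I$ is an ideal, all of $ai,ia,aj,ja$ lie in $I$, so every term lies in $I^2$. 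The $\mu$-identity treats $(ij)a$ symmetrically, and $I^2\cdot I^2\subseteq A\cdot I^2\subseteq I^2$ shows that $I^2$ is a subalgebra, hence an ideal.

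For the converse (iii)~$\Rightarrow$~(iv), which is the heart of the argument, I would work in the free $\V$-algebra $A$ on three generators $x$, $y$, $z$ and take $I$ to be the ideal generated by $x$ and $y$. Then $xy\in I^2$, so by~(iii) both $z(xy)$ and $(xy)z$ lie in $I^2$. Since $\K$ is infinite, Theorem~\ref{Theorem Homogeneity} allows me to split everything into homogeneous components for the multidegree in $(x,y,z)$; as $I$ is generated by homogeneous elements, both $I$ and $I^2$ are multigraded, and $z(xy)$, $(xy)z$ are homogeneous of multidegree $(1,1,1)$. The point is then to identify the multidegree-$(1,1,1)$ part of $I^2$: it is spanned by the products $pq$ with $p,q$ homogeneous elements of $I$ whose multidegrees add up to $(1,1,1)$. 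The degree-$(0,0,1)$ part of $I$ is zero, because $z\notin I$; the remaining homogeneous pieces of $I$ needed here are spanned by $x$, $y$ and by $xz,zx,yz,zy$. Enumerating the admissible splittings of $(1,1,1)$ leaves exactly the eight products
\[
(zx)y,\; (xz)y,\; y(zx),\; y(xz),\; (zy)x,\; (yz)x,\; x(zy),\; x(yz),
\]
so that $z(xy)$ and $(xy)z$, lying in their span, are linear combinations of them. The coefficients are the desired scalars $\lambda_i$ and $\mu_i$, and the resulting equalities, holding in the free algebra, are identities of~$\V$.

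It remains to hook the categorical conditions onto the hub. For a variety of $\Omega$-groups with a single bilinear multiplication, Orzech's defining axioms for a \emph{category of interest} say precisely that the ternary products $z(xy)$ and $(xy)z$ re-expand as linear combinations of the bracketings of $x$, $y$, $z$; unwinding these axioms in both directions gives (ii)~$\Leftrightarrow$~(iv). The equivalence (i)~$\Leftrightarrow$~(ii) is the specialisation to varieties of $\Omega$-groups of the results of~\cite{acc}, where algebraic coherence of the fibration of points is identified with Orzech's notion. I expect the genuine obstacle to lie exactly here, in the passage between the abstract condition~(i)---coherence of the change-of-base functors of the fibration of points---and the concrete identities: the implications (iii)~$\Leftrightarrow$~(iv) are elementary once the multigrading is available, so the real work is in the machinery of~\cite{acc} and in checking that, for one bilinear operation, Orzech's general operational axioms collapse to the eight-term $\lambda/\mu$-expansions and impose nothing further.
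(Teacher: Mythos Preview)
The paper does not actually prove this theorem: the trailing \verb|\noproof| signals that the result is quoted from~\cite{GM-VdL2}, so there is no in-paper argument to compare against. That said, your sketch is sound where it is concrete and appropriately cautious where it is not.

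Your treatment of (iii)~$\Leftrightarrow$~(iv) is correct and is indeed the computational core. The direction (iv)~$\Rightarrow$~(iii) is immediate. For (iii)~$\Rightarrow$~(iv), working in the free $\V$-algebra on $\{x,y,z\}$ with $I=(x,y)$, the crucial step is that $I$ and $I^{2}$ are multigraded; this uses Theorem~\ref{Theorem Homogeneity} (hence the hypothesis that $\K$ is infinite), and your identification of the $(1,1,1)$-component of $I^{2}$ with the span of the eight monomials is accurate. The one point that deserves a sentence of justification is that $I_{(0,0,1)}=0$: this follows from the retraction $A\twoheadrightarrow A/(x,y)$ onto the free $\V$-algebra on $\{z\}$, which sends $I$ to zero but $z$ to a nonzero element (assuming $\V$ is not the trivial variety).

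The place to be careful is the categorical linkage. Your claim that ``(i)~$\Leftrightarrow$~(ii) is the specialisation of the results of~\cite{acc}'' overstates what~\cite{acc} provides: that paper shows that categories of interest are algebraically coherent, i.e.\ (ii)~$\Rightarrow$~(i), but not the converse in general. The reverse implication---that algebraic coherence of a variety of non-associative $\K$-algebras forces the $\lambda/\mu$-rules---is precisely the contribution of~\cite{GM-VdL2}, and it goes through a direct analysis of the joint-epimorphy condition defining algebraic coherence rather than via Orzech's axioms. So your cycle should really read (ii)~$\Rightarrow$~(i) by~\cite{acc}, (i)~$\Rightarrow$~(iv) by the argument of~\cite{GM-VdL2}, and then (iv)~$\Rightarrow$~(ii) by checking Orzech's axioms for a single bilinear operation (which, as you say, collapse to the $\lambda/\mu$-rules). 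You correctly flag this passage as the genuine obstacle; just be aware that it is not already packaged in~\cite{acc}.
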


We call those two identities together \defn{the $\lambda/\mu$-rules}. One should understand them as some ``general associativity rules''. We remark that the case where $\lambda_1=1=\mu_8$ and the other coefficients are zero is just associativity. It is easy to see that the Jacobi identity is another particular case of the $\lambda/\mu$-rules.

\begin{theorem}\label{RR to AC}
Let $\K$ be an infinite field and $\V$ be a variety of non-associative algebras over $\K$. If $\V$ has representable representations, then it is an algebraically coherent category.
\end{theorem}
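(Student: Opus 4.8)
The plan is to use the characterisation of algebraic coherence from Theorem~\ref{Theorem AC iff Orzech}, specifically condition~(iv): it suffices to produce scalars $\lambda_i$, $\mu_i\in\K$ such that the $\lambda/\mu$-rules hold in $\V$. These are homogeneous identities of degree three, and by Theorem~\ref{Theorem Homogeneity} it is enough to check them on free generators. So I would fix an abelian algebra $X$ with enough free generators to ``detect'' the coefficients, and exploit that representability of $\Rep(\_,X)$ forces $\Rep(B+B',X)\cong\Rep(B,X)\times\Rep(B',X)$, exactly as explained after Proposition~\ref{jsp} and illustrated in the proof of Proposition~\ref{Ass not AR}.

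Concretely, I would take $B^1$, $B^2$ abelian (trivial multiplication) algebras on single generators $b^1$, $b^2$, and let $X$ be an abelian algebra freely generated by a list of symbols chosen so that the one-sided actions $b^i x$, $x^{b^i}$, and their iterates produce linearly independent elements. One sets up two commuting ``module'' actions $\xi^i\colon B^i\flat X\to X$ --- since $X$ is abelian and $B^i$ has zero bracket, Lemma~\ref{sdp in V} reduces well-definedness of $\xi^i$ to a handful of trivial identities, which hold by construction. Representability then guarantees that $B^1+B^2$ acts on $X$, hence by Lemma~\ref{sdp in V} that $(B^1+B^2)\ltimes X$ lies in $\V$. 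Now I would expand, inside this semi-direct product, each identity of $\V$ applied to elements of the form $(b^1,0)$, $(b^2,0)$, $(0,x)$: using the multiplication formula~\eqref{sdp} and the fact that $\V$ satisfies its own identities, this yields linear relations among the elements of $X$ coming from the actions. Reading off coefficients of the independent generators produces precisely a system of linear equations on the ``structure constants'' of the actions; the content of the argument is that the \emph{only} way this system can be consistent for \emph{all} choices of module structures is that some $\lambda/\mu$-rule already holds as an identity in $\V$. More precisely, I expect to argue contrapositively: if no $\lambda/\mu$-rule held, one could build actions witnessing a failure of some identity of $\V$ in $(B^1+B^2)\ltimes X$, contradicting representability --- just as the failure $b^1(xb^2)=z^{12}\neq z^{21}=(b^1x)b^2$ did in Proposition~\ref{Ass not AR}.

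The cleanest route is probably to recall from~\cite{GM-VdL2} (or reprove) that algebraic coherence is equivalent to ``$I^2$ is an ideal whenever $I$ is'' (condition~(iii)), and then show directly that representability of representations forces this. The idea: an ideal $I\trianglelefteq A$ gives a split extension, hence an action of $A/I$ on $I$; one wants $I^2$ stable under this action. If it were not, one could detect a monomial $z(xy)$ or $(xy)z$ (with $z\in A/I$-part, $x,y\in I$) landing outside $I^2$, and then transplant this into a semi-direct product of a coproduct as above to violate an identity of $\V$. Either formulation reduces to the same linear-algebra core.

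The main obstacle will be bookkeeping: choosing $X$ (i.e.\ the right set of free generators and the right action formulas) so that the relevant structure constants are genuinely detectable, and then organising the expansion of the $\V$-identities in $(B^1+B^2)\ltimes X$ so that the resulting linear system transparently forces a $\lambda/\mu$-rule rather than merely some weaker constraint. In particular one must handle identities of $\V$ of arbitrary degree, not just degree three --- the point being that whatever identities $\V$ satisfies, their restriction to the ``mixed'' monomials in the semi-direct product must be compatible with \emph{all} admissible actions, and a counting/genericity argument shows this compatibility is exactly condition~(iv). I would expect to isolate a single well-chosen instance (one identity of $\V$, evaluated on $(b^1,0)$, $(b^2,0)$ and a couple of generators of $X$) that already pins down the $\lambda/\mu$-rules, keeping the explicit computation short and deferring the genericity of the construction to a remark.
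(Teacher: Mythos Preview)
Your approach has a genuine gap: you are using the wrong direction of the bijection $\Rep(B+B',X)\cong\Rep(B,X)\times\Rep(B',X)$. The surjectivity direction---combining two given actions of $B^1$ and $B^2$ into an action of $B^1+B^2$---is indeed the technique of Proposition~\ref{Ass not AR} and of Sections~\ref{Section two degree}--\ref{Subvarieties}, but it only works once you already \emph{know} an identity of $\V$ (associativity there, the $\lambda/\mu$-rules later) that you can then test in the semi-direct product. Here you know nothing about $\V$: you are trying to \emph{prove} that the $\lambda/\mu$-rules hold, so there is no identity to ``expand'' and no identity to ``violate'' in your contrapositive. When you combine two actions via representability you have no control over what the resulting action of $b^1b^2$ does; all you learn is that $(B^1+B^2)\ltimes X$ lies in $\V$, which tells you nothing new about the unknown identities of $\V$.

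The paper uses \emph{injectivity} instead. It constructs, directly, a single representation $\xi$ of $B+B'$ on an abelian algebra $V=A\times C$, where $A$ is the span of the classes of $(bb')x$, $(b'b)x$, $x(bb')$, $x(b'b)$ inside a suitable quotient $Q$ of the free algebra on $\{b,b',x\}$ (one divides out the identities of $\V$ together with all degree~$\geq 3$ monomials having a repeated variable, and certain degree~$2$ monomials). The action is rigged so that $b$ and $b'$ each act trivially on $V$, while $bb'$ sends the extra generator $c$ to $(bb')x\in A$, and similarly for the other three mixed products. Since both restrictions are the zero action, injectivity of the comparison map forces $\xi$ itself to be zero; hence $(bb')x=0$ in $A$. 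By construction of $Q$, this vanishing says precisely that $(bb')x$ is, modulo the identities of $\V$, a linear combination of degree~$3$ monomials in which one of $b$, $b'$, $x$ is repeated---i.e.\ monomials where $b$ and $b'$ do not appear together---which is exactly a $\mu$-rule (and symmetrically a $\lambda$-rule). The delicate step you are missing is the verification that $\xi$ really is a $\V$-algebra morphism $(B+B')\flat V\to V$ and an action; the paper handles this by an explicit factorisation through $Q\times C$. Your alternative via the $2$-variety condition~(iii) runs into the same obstacle: representability of \emph{representations} only concerns abelian $X$, so you would need to pass from $I$ to $I/I^2$ and then argue that the vanishing there forces the $\lambda/\mu$-rules, which brings you back to essentially the paper's construction.
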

\begin{proof}
Consider the free non-associative algebra on the set $\{b, b', x\}$. We quotient it by the ideal $I$ generated by $xb$, $bx$, $b'x$, $xb'$, $xx$, $bb$, $b'b'$ and all monomials of degree~$3$ or higher in which one of the variables $b$, $b'$, $x$ is repeated. We reflect the result into the variety~$\V$ by dividing out the ideal generated by the identities of~$\V$, and obtain an algebra~$Q$. Since $\K$ is an infinite field, by Theorem~\ref{Theorem Homogeneity} the sub--vector space generated by the classes of the elements of the form $(bb')x$ (all possible permutations and bracketings) consists entirely of classes of linear combinations of such elements of degree $3$. We call this vector space $A$, and view it as an abelian $\K$-algebra.

We write $B$ for the abelian $\V$-algebra generated by $b$, $B'$ for the abelian $\V$-algebra generated by $b'$ and $C$ for the free vector space generated by~$c$, viewed as an abelian $\K$-algebra. In $\V$, we then define a representation of $B + B'$ on the abelian algebra $V=A\times C$ by ${}^ba = 0 = a^b$, ${}^{b'}a = 0 = a^{b'}$, ${}^{bb'}a = {}^{b'b}a = 0 = a^{bb'} = a^{b'b}$ where $a$ is any element of $A$, ${}^bc = 0 = c^b$, ${}^{b'}c = 0 = c^{b'}$ and ${}^{bb'}c = (bb')x$, ${}^{b'b}c = (b'b)x$, $c^{bb'} = x(bb')$, $c^{b'b} = x(b'b)$. We need to verify that this does indeed determine an action. To do so, let us give an explicit description of the corresponding $\V$-algebra morphism $\xi\colon (B+B')\flat V\to V$. We first consider four $\V$-algebra morphisms $B+B'+A+C\to Q\times C$, one which sends $\theta(b,b',a,c)$---where $\theta$ is a polynomial in $n+3$ variables and $a\in A^n$ for some $n\in \N$---to $(\theta(b,b',0,x),0)$, and three others, which send it to $(\theta(0,0,a,0),0)$, $(\theta(0,0,0,x),0)$ and $(0,\theta(0,0,0,c))$, respectively. We combine them into the $\K$-linear map
\begin{align*}
B+B'+A+C&\to Q\times C\colon\\
\theta(b,b',a,c)&\mapsto (\theta(b,b',0,x)-\theta(0,0,0,x)+\theta(0,0,a,0),\theta(0,0,0,c)).	
\end{align*}
Since this map vanishes on monomials in~$a$ and~$c$ that contain at least one element of $A$ and one $c$, it factors through the quotient
\[
{(B+B')+(A+C)\to (B+B')+(A\times C)=(B+B')+V}
\]
to a $\K$-linear map $(B+B')+V\to Q\times C$, which in turn restricts to a $\K$-linear map $(B+B')\flat V\to Q\times C$. It is easy to check that this map factors over the inclusion of $A\times C=V$ into $Q\times C$. The resulting factorisation is a $\K$-linear map $\xi\colon (B+B')\flat V\to V$, which is actually a morphism of $\K$-algebras, because it sends all binary products in $(B+B')\flat V$ to zero, as it should. We may now check that $\xi$ satisfies the axioms of a $B\flat(\_)$-algebra, which follows immediately from the definitions---see~\ref{Actions}. 

Since $\V$ has representable representations in~$\V$, the action $\xi$ is necessarily zero, because that is where it is sent by the canonical isomorphism
\[
\Rep(B+B',V)\to \Rep(B,V)\times \Rep(B',V)
\]
defined by restricting an action of $B+B'$ on $V$ to an action of $B$ on $V$ together with an action of $B'$ on $V$. It follows that $(bb')x$, $(b'b)x$, $x(bb')$ and $x(b'b)$ vanish in~$A$. This means that the identities of $\V$ allow us to write each of them as a linear combination of some degree $3$ elements of the ideal $I$, which can only mean that the $\lambda/\mu$-rules hold in $\V$. In other words, $\V$ is algebraically coherent.
\end{proof}

\begin{remark}
If in Theorem~\ref{RR to AC} the variety $\V$ has representable actions, then an alternate proof may be given, which uses that action representability implies a weaker condition called \emph{action accessibility}~\cite{BJ07}. As it turns out, a variety of non-associative algebras over an infinite field is action accessible if and only if it satisfies the equivalent conditions of Theorem~\ref{Theorem AC iff Orzech}. Indeed, as A.~Montoli proved in~\cite{Montoli}, all \emph{Orzech categories of interest} are action accessible, which yields one implication. The converse makes non-trivial use of Lemma~2.9 in~\cite{CMM2}.
\end{remark}

\section{Identities of degree two}\label{Section two degree}

We proved that from the representability of representations we may deduce certain identities of degree three. The goal of this section is to go for the identities of degree two necessary to obtain Lie algebras. In fact, action representability does not characterise the Jacobi identity alone, since the variety $\Leib_{\K}$ of Leibniz algebras over $\K$ is not action representable in general. This observation comes from Theorem~5.5 of~\cite{Casas-Ladra}, but it is possible to give a proof, similar to the one of Proposition~\ref{Ass not AR}, showing that $\Leib_{\K}$ does not have representable representations.

We are now going to show that any variety of non-associative algebras with representable representations over an infinite field satisfies non-trivial identities of degree two. We start with fields of characteristic zero in Proposition~\ref{two degree char 0}, and continue with fields of prime characteristic in Proposition~\ref{thm:PrimeCharCase}. 

Our main technique is to obtain a system of polynomial equations, which we then prove is inconsistent. To do so, we use computer algebra in two distinct ways: first we need to produce the system of equations itself; due to the complexity of the manipulations needed here, we preferred to do this on a computer, rather than by hand: see below and~\cite{MatveiT-1} where this is explained in further detail. Next, the system of equations thus obtained has to be shown inconsistent. The size of the system makes it impossible to do this by hand; we used the open-source software package \Singular~\cite{DGPS} which gives us an explicit reason for the system's inconsistency. How this is done is explained below. Part of the code used is displayed in Figure~\ref{Code}, 
\begin{figure}
\tiny
\begin{Verbatim}[breaklines=true,numbers=left]
ring r=0,(x(1..8),y(1..8)),dp;
poly f(1) = -1 + (y(1)*y(6)) + (y(2)*y(2)) + (y(3)*x(6)) + (y(4)*x(2));
poly f(2) = (y(1)*y(1)) + (y(2)*y(5)) + (y(3)*x(1)) + (y(4)*x(5)) + y(6);
poly f(3) = (y(1)*y(2)) + (y(2)*y(6)) + (y(3)*x(2)) + (y(4)*x(6)) + y(5);
poly f(4) = (y(1)*y(3)) + (y(2)*y(7)) + (y(3)*x(3)) + (y(4)*x(7)) + y(8);
poly f(5) = (y(1)*y(4)) + (y(2)*y(8)) + (y(3)*x(4)) + (y(4)*x(8)) + y(7);
poly f(6) = (y(1)*y(5)) + (y(2)*y(1)) + (y(3)*x(5)) + (y(4)*x(1));
poly f(7) = (y(1)*y(7)) + (y(2)*y(3)) + (y(3)*x(7)) + (y(4)*x(3));
poly f(8) = (y(1)*y(8)) + (y(2)*y(4)) + (y(3)*x(8)) + (y(4)*x(4));
poly f(9) = -1 + (y(5)*y(5)) + (y(6)*y(1)) + (y(7)*x(5)) + (y(8)*x(1));
\end{Verbatim}

\vspace{-2ex}$\vdots$


\caption{Singular code}\label{Code}
\end{figure}
and the full code and its output are accessible as a set of ancillary files in the arXiv version of our paper.

\begin{proposition}\label{two degree char 0}
Let $\K$ be a field of characteristic $0$ and $\V$ be a variety of non-associative algebras over $\K$. If $\V$ has representable representations, then $\V$ satisfies a non-trivial identity of degree two.
\end{proposition}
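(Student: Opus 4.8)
The plan is to argue by contradiction: assume that $\V$ has representable representations but satisfies no non-trivial identity of degree two (by Theorem~\ref{Theorem Homogeneity} this is the same as having no non-trivial homogeneous identity in degree one or two). By Theorem~\ref{RR to AC} the variety $\V$ is then algebraically coherent, so by Theorem~\ref{Theorem AC iff Orzech} we may fix scalars $\lambda_1,\dots,\lambda_8,\mu_1,\dots,\mu_8\in\K$ for which the $\lambda/\mu$-rules hold in $\V$. The goal of the argument is to show that, when $\kar(\K)=0$, these sixteen scalars cannot exist; this is the desired contradiction.

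To produce equations in the $\lambda_i$ and $\mu_i$, I would reuse the mechanism of the proof of Theorem~\ref{RR to AC}: by reflecting suitable quotients of free non-associative algebras into $\V$, one builds a few small abelian $\V$-algebras $B$, $B'$, \dots\ acting on a single abelian algebra $X$ whose underlying vector space is spanned by symbols arranged so that the basic action maps are described exactly by the coefficients $\lambda_i$, $\mu_i$; here the absence of a degree-two identity is precisely what makes the relevant semidirect products land in $\V$, so that by Lemma~\ref{sdp in V} the prescribed morphisms are genuine actions. Representability of representations then forces the coproduct $B+B'+\cdots$ to act on $X$ as well, so that $H=(B+B'+\cdots)\ltimes X$ lies in $\V$ and hence satisfies the $\lambda/\mu$-rules on every triple of its elements. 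Instantiating those rules on triples drawn from the acting generators and from $X$, and re-expanding the composites of basic action maps that occur, yields a large list of polynomial relations in $\lambda_1,\dots,\mu_8$; normalising one composite action map to equal the identity endomorphism of $X$ is what contributes the inhomogeneous ``$-1$'' terms. Since carrying out these expansions by hand is hopeless, this step is done by a computer program (see~\cite{MatveiT-1}), whose output is the system $f(1)=\cdots=f(224)=0$ displayed in Figure~\ref{Code}, where the variables \texttt{x(1..8)}, \texttt{y(1..8)} play the roles of $\lambda_1,\dots,\lambda_8,\mu_1,\dots,\mu_8$.

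It then remains to show that this system is unsolvable. Here I would feed the ideal $i=\langle f(1),\dots,f(224)\rangle\subseteq\Q[x(1),\dots,x(8),y(1),\dots,y(8)]$ to \Singular~\cite{DGPS}: the standard basis \texttt{std(i)} equals $\langle 1\rangle$, and \texttt{liftstd} returns an explicit matrix $T$ exhibiting $1$ as a combination $\sum_k T_k\,f(k)$. Since every field of characteristic $0$ contains $\Q$, this certificate shows the system has no solution in $\K$, so the coefficients $\lambda_i$, $\mu_i$ cannot exist. This contradiction proves that $\V$ satisfies a non-trivial identity of degree two.

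Apart from the purely computational bulk — which is why two computer interventions are needed, one to generate the equations and one (\Singular) to certify their inconsistency — the main obstacle is the design of the family of representations in the second step: it must be rich enough that the $\lambda/\mu$-rules on $H$ genuinely over-determine all sixteen coefficients, while at the same time each chosen action must be visibly well defined. The ``no degree-two identity'' hypothesis is spent precisely on the latter point, since in a commutative or an anticommutative variety some of these semidirect products would fail to lie in $\V$ and the argument would collapse. Characteristic $0$ enters only at the end, through the rational coefficients of the inconsistency certificate; the prime characteristic case requires a genuinely different treatment, given in Proposition~\ref{thm:PrimeCharCase}.
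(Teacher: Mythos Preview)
Your overall strategy coincides with the paper's: argue by contradiction, invoke Theorem~\ref{RR to AC} to obtain the $\lambda/\mu$-rules, construct abelian actions of several one-dimensional algebras $B^i$ on a single abelian $X$, use representability of representations to force the big semidirect product $(B^1+B^2+B^3)\ltimes X$ into $\V$, read off polynomial constraints on the $\lambda_i,\mu_i$ from the $\lambda/\mu$-rules in that semidirect product, and certify inconsistency over $\Q$ with \Singular. You also correctly identify why characteristic~$0$ enters.

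Two points in your description are not how the construction actually works and would mislead you if you tried to reconstruct it. First, the individual actions of $B^i$ on $X$ are \emph{not} ``described exactly by the coefficients $\lambda_i,\mu_i$'': they are purely combinatorial shift maps on a $79$-dimensional basis ($b^ix=y^i_l$, $xb^i=y^i_r$, $b^iy^j_\alpha=z^{ji}_{\alpha l}$ for $i\neq j$, etc.), chosen precisely so that \emph{no} coefficients appear. The $\lambda_i,\mu_i$ only enter when one expands elements such as $(xb^1)b^2$ or $(b^1(b^2b^3))x$ via the $\lambda/\mu$-rules inside the \emph{coproduct} semidirect product and then compares against the explicit basis; the inhomogeneous $-1$ terms arise simply from moving the left-hand side across, not from ``normalising a composite action map to the identity''. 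Second, well-definedness of each $B^i$-action is not primarily a consequence of the absence of degree-two identities; the key point is that in $B^i\ltimes X$ every product of three elements vanishes by construction, so the $\lambda/\mu$-rules (and any higher-degree identity) hold trivially there. The ``no degree-two identity'' hypothesis is what guarantees that the remaining identities to check in $B^i\ltimes X$ are vacuous, and---more importantly---that the basis elements $z^{ij}_{\alpha\beta}$, $t^{ijk}_{\alpha\beta\gamma}$ stay linearly independent in $X$, which is what makes the extracted polynomial system genuinely constrain the $\lambda_i,\mu_i$.
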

\begin{proof}
Let us assume that $\V$ is a variety with representable representations, all of whose non-trivial identities are of degree three or higher. From Theorem~\ref{RR to AC}, we already know that $\V$ satisfies the $\lambda / \mu$-rules. The strategy of this proof is to show that representability of representations forces the coefficients $\lambda_i$ and $\mu_i$ to satisfy a specific system of polynomial equations whose set of solutions is empty (unless some degree two identities are satisfied, but we assumed that it is not the case). This then means that algebraic coherence, and therefore representability of representations, cannot hold in $\V$---which is a contradiction.

In order to do so, we let $X$ be the $79$-dimensional abelian algebra generated by the elements $ \{ \text{$x$, $y^i_\alpha$, $z^{ij}_{\alpha\beta}$, $t^{ijk}_{\alpha\beta\gamma}$}\mid \text{$\alpha,\beta,\gamma \in\{ r,l\}$ and $(i\; j\; k) \in S_3$}\}$, where $r$ stands for \emph{right} and $l$ for \emph{left}, while $i$, $j$, $k\in \{1,2,3\}$ are pairwise non-equal. Furthermore, $B^i$ is the abelian algebra generated by $b^i$, for $i \in \{ 1,2,3\}$. Those algebras are in $\V$ because they are abelian. We now define (abelian) actions of the algebras $B^i$ on $X$:
\begin{align*}
b^ix &=y^i_l & xb^i &=y^i_r \\ 
b^i y^j_\alpha &=
\begin{cases}
z^{ji}_{\alpha l} & \text{if $i \neq j$} \\
0 & \text{otherwise}
\end{cases}
&
y^j_\alpha b^i &= 
\begin{cases}
z^{ji}_{\alpha r} & \text{if $i \neq j$} \\
0 & \text{otherwise}
\end{cases}
\\
b^i z^{jk}_{\alpha\beta}&=
\begin{cases}
t^{jki}_{\alpha \beta l} & \text{if $i \neq j$ and $i\neq k$} \\
0 & \text{otherwise}
\end{cases}
&
z^{jk}_{\alpha\beta}b^i&=
\begin{cases}
t^{jki}_{\alpha \beta r} & \text{if $i \neq j$ and $i\neq k$} \\
0 & \text{otherwise}
\end{cases}
\end{align*}
while $b^i t^{jkm}_{\alpha\beta\gamma}=t^{jkm}_{\alpha\beta\gamma}b^i =0$. Those actions are well defined. Indeed, the $\lambda / \mu$-rules are satisfied since any product of three elements in the semi-direct product $B^i \ltimes X$ vanishes. In order to familiarise ourselves with those actions which will appear again later, we first give some examples of their behaviour in $(B^1+B^2+B^3)\ltimes X$:
\begin{align*}
(b^1(b^2x))b^3 = t^{213}_{ l l r} , \quad ((b^2x)b^3)b^2 = 0, \quad b^1(b^2y^3_r)=t^{321}_{rll}.
\end{align*}
Next, as explained in Section~\ref{Section Preliminaries}, by representability of representations the semi-direct product $(B^1+B^2+B^3)\ltimes X$ is in $\V$ and thus all elements of this semi-direct product have to satisfy the $\lambda / \mu$-rules. We construct an equational system on the $\lambda_i$ and $\mu_i$ whose set of solutions is empty, by checking the $\lambda / \mu$-rules for some specific elements. For example, since $(xb^1)b^2$ is in $(B^1+B^2+B^3)\ltimes X$, decomposing it a first time gives:
\begin{align*}
(xb^1)b^2 = \mu_{1}(b^2x)b^1&+\mu_{2}(xb^2)b^1+
\mu_{3}b^1(b^2x)+\mu_{4}b^1(xb^2) 
\\&+\mu_{5}(b^2b^1)x+\mu_{6}(b^1b^2)x+
\mu_{7}x(b^2b^1)+\mu_{8}x(b^1b^2).
\end{align*}
Then we apply the $\lambda / \mu$-rules again on the monomials where $x$ is isolated and write everything on the same side. This gives us a linear combination $\sum f_k(\lambda_i,\mu_j) z^{ij}_{\alpha\beta}$ which must be equal to zero. Since the $z^{ij}_{\alpha\beta}$ are linearly independent, all the polynomials $f_k\in \Q[\lambda_1, \dots, \lambda_8, \mu_1, \dots, \mu_8]$ vanish. Following this procedure for $(xb^1)b^2$ gives us the eight first polynomials of Figure~\ref{Code} (lines 2--9, written in \Singular~\cite{DGPS} code). To compute the $f_k$ for $k=9$, \dots, $32$, we repeat this procedure on the elements $(b^1x)b^2$, $b^2(b^1x)$ and $b^2(xb^1)$. The details of the computations are done and explained in~\cite{MatveiT-1}.

Next, we use the fact that $(b^1b^2)b^3$ and $b^1(b^2b^3)$ should also satisfy the $\lambda /\mu$-rules since they are elements of $(B^1+B^2+B^3)\ltimes X$. Making the decomposition work on $(b^1(b^2b^3))x$, $x(b^1(b^2b^3))$, $((b^1b^2)b^3)x$ and $x((b^1b^2)b^3)$---again, a procedure explained in~\cite{MatveiT-1}---we find the last 192 polynomials in the system of equations.

Finally, we employ the computer algebra system \Singular~\cite{DGPS}, which uses Gr\"obner bases, to look for a common root of the polynomials. Based on the code in Figure~\ref{Code}, the computer system tells us that $1$ is a linear combination in $\Q[\lambda_1, \dots, \lambda_8, \mu_1, \dots, \mu_8]$ of the polynomials $f_{1}$, \dots, $f_{224}$. The arXiv version of this paper includes an ancillary file containing an explicit way to write $1$ as a linear combination of the $(f_i)_i$ in $\Q[\lambda_1, \dots, \lambda_8, \mu_1, \dots, \mu_8]$. Hence the set of solutions of the polynomial system is empty, which completes the proof.
\end{proof}

\begin{remark}
The parameter \verb+dp+ in line~1 means that degree reverse lexicographical ordering of polynomials is chosen. This choice does not have any impact in the proof other than the efficiency of the computation.
\end{remark}
\begin{remark}
The system of equations in Figure~\ref{Code} is not the smallest inconsistent system of equations involving the $\lambda_i$ and $\mu_i$. For instance, from the output in the ancillary files it is immediately clear that $f_{140}$ can be removed from the system while maintaining its inconsistency. Actually, the system may be reduced significantly: we checked that equations number $1$, $3$, $4$, $6$, $7$, $10$, $11$, $12$, $13$, $14$, $15$, $16$, $17$, $18$, $19$, $20$, $22$, $23$, $25$, $26$, $27$, $28$, $29$, $30$, $31$, $33$, $34$, $35$, $36$, $37$, $38$, $39$, $40$, $41$, $42$, $43$, $44$, $45$, $46$, $47$, $49$, $56$, $82$, $92$ together still form an inconsistent system of polynomial equations. However, giving the explicit polynomials that prove the system's inconsistency becomes harder as its size goes down. 
\end{remark}

In order to have the result for all infinite fields, we extend this to prime characteristics. The proof needs fine-tuning since in characteristic zero rational coefficients are allowed, something which is not the case in prime characteristic.

\begin{proposition}\label{thm:PrimeCharCase}
Let $\K$ be an infinite field of prime characteristic and $\V$ a variety of non-associative algebras over $\K$. If $\V$ has representable representations, then $\V$ satisfies a non-trivial identity of degree two.
\end{proposition}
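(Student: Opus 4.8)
The plan is to recycle, essentially verbatim, the algebra $X$, the abelian algebras $B^1$, $B^2$, $B^3$, the actions, and the resulting list of $224$ polynomials $f_1,\dots,f_{224}$ from the proof of Proposition~\ref{two degree char 0}. The point is that none of these ingredients uses division: the algebra $X$ is abelian, the actions are prescribed combinatorially on a basis, the $\lambda/\mu$-rules hold in each $B^i\ltimes X$ simply because all ternary products vanish there, and the decomposition of elements such as $(xb^1)b^2$ by means of the $\lambda/\mu$-rules is a purely formal manipulation. Hence exactly the same polynomials $f_k$ appear, only now with \emph{integer} coefficients: $f_k\in\Z[\lambda_1,\dots,\lambda_8,\mu_1,\dots,\mu_8]$. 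As in Section~\ref{Section Preliminaries}, if $\V$ has representable representations and no non-trivial identity of degree two, then $(B^1+B^2+B^3)\ltimes X$ lies in $\V$, so the tuple $(\lambda_i,\mu_j)\in\K^{16}$ supplied by Theorem~\ref{RR to AC} is a common zero of all the $f_k$. It therefore suffices to show that this polynomial system has no solution over $\K$.

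First I would dispose of all but finitely many characteristics at once. The \Singular{} computation underlying Proposition~\ref{two degree char 0} exhibits an explicit identity $\sum_k g_k f_k = 1$ with $g_k\in\Q[\lambda_i,\mu_j]$; clearing denominators produces a nonzero integer $N$ and polynomials $h_k\in\Z[\lambda_i,\mu_j]$ with $\sum_k h_k f_k = N$. If $\kar(\K)=p$ does not divide $N$, then $N$ is invertible in $\K$, so $1$ lies in the ideal of $\K[\lambda_i,\mu_j]$ generated by $f_1,\dots,f_{224}$; evaluating at $(\lambda_i,\mu_j)$ gives $1=0$ in $\K$, a contradiction.

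It remains to handle the finitely many primes $p$ dividing $N$. For each of these I would rerun the analogue of the \Singular{} computation with base ring $\mathbb{F}_p$ in place of $\Q$, applied to the same list $f_1,\dots,f_{224}$ read modulo $p$. If the output again writes $1$ as an $\mathbb{F}_p$-linear combination of the $f_k$, we conclude as above that the system is inconsistent over $\K\supseteq\mathbb{F}_p$, and the relevant certificates can be supplied as ancillary files, one per exceptional prime. Should the reduced system $f_1=\dots=f_{224}=0$ happen to admit a solution modulo some exceptional $p$, one enlarges the construction for that characteristic—adjoining further generators to $X$ together with further (abelian) actions of additional copies of the $B^i$, which are again well defined because ternary products in the semi-direct product vanish—so as to obtain more relations among the $\lambda_i$ and $\mu_j$ and thereby an inconsistent system over $\mathbb{F}_p$.

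The main obstacle is precisely this last point: for the exceptional primes one cannot lean on the characteristic-zero certificate and must instead verify inconsistency by a fresh Gröbner basis computation over $\mathbb{F}_p$, or, failing that, engineer enough additional actions to force inconsistency; keeping such an enlarged system within reach of the computer algebra package is the delicate part. Everything else—the reduction modulo $p$, the appeal to Theorem~\ref{RR to AC}, and the principle that a representation of $B^1+B^2+B^3$ on $X$ amounts to three separate representations—carries over without change to prime characteristic, since $\K$ is assumed infinite throughout.
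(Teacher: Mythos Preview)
Your overall strategy coincides with the paper's: reuse the same $224$ integer-coefficient polynomials, clear denominators in the characteristic-zero certificate to obtain an integer $N$ with $\sum_k h_k f_k = N$, and then handle separately the primes dividing~$N$. The difficulty is the step you describe as ``for each of these I would rerun the analogue of the \Singular{} computation with base ring~$\mathbb{F}_p$'': the integer $N$ that actually arises is $541$ digits long, so its prime factorisation is not computationally accessible, and you cannot enumerate the exceptional primes at all. Your fallback of enlarging~$X$ and adjoining further actions is therefore not a refinement of the method but a replacement for it, and you give no indication that it would terminate.

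The paper sidesteps the factoring problem entirely by a trick borrowed from~\cite{GM-VdL3}: recompute the Gr\"obner basis with a \emph{different} monomial order (concretely, swap~$\mu_7$ and~$\mu_8$) to obtain a second certificate $\sum_k h'_k f_k = N'$ with a different integer~$N'$. Any exceptional prime must divide both~$N$ and~$N'$, hence their~$\gcd$, which can be found by the Euclidean algorithm without factoring either number. In the present case $\gcd(N,N')$ has $2$ as its only prime divisor, so a single extra \Singular{} run over~$\mathbb{F}_2$ finishes the proof. No enlargement of the construction is needed.
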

\begin{proof}
In this proof, we mimic the trick used in Theorem 4.2 of \cite{GM-VdL3}. Indeed, in the proof of Proposition~\ref{two degree char 0}, we wrote $1$ as a linear combination of the $(f_i)_{1\leq i\leq 224}$ in~$\Q[\lambda_1, \dots, \lambda_8, \mu_1, \dots, \mu_8]$. Instead of doing this, we will write some $m\in \N$ as a linear combination in $\Z[\lambda_1, \dots, \lambda_8, \mu_1, \dots, \mu_8]$ of these 224 polynomials because, then, we will just have to check that the system has an empty set of solutions for all infinite fields whose characteristic divides $m$. The problem is that the natural number $m$, equal to $\seqsplit{1456799590845598024309695305537804495463156624065346853298572270548680472045472116250386006867468944668940571897397172623649920632890267296075654343504208784478418777210005902957685588843071241487781773778768300649166665925232915930417490549693708773858134434948706688018655694558517577569557620995746293278480812431412260574404024455598320441859722042018260900473969846828608645627871830598735661629103334242282129060658943436705405397251478428615134881732782202177457769419650118822781315636327042662036615068825734248984068309403420950318}$, which we find with a first computation in \Singular\ is 541 digits long. This makes finding its prime divisors very difficult. The trick is to compute the Gr\"obner basis with a different monomial order. This will give us another natural number $m'$ and we will only need to check the inconsistency of the system of equations for the common prime divisors of $m$ and $m'$. A different monomial order may be chosen in line 1 of the code in Figure~\ref{Code}: for instance, swapping $\mu_7$ and $\mu_8$ is expressed as 
\begin{Verbatim}[numbers=left]
ring r=0,(x(1..8),y(1..6),y(8),y(7)),dp;
\end{Verbatim}
This leads us to the number $m'$ which is equal to $
\seqsplit{52571763195879165827354282293287553627794448211705904729835561552244473038660730349648792264517454512332975448381896476109811545353459315045404360086815158663052807129031203192914881793627758784552281330948466297460892213124611259557307705714671224295558125970110748222333178648754268543416439720181633558932973341496326862122531200555664611204869818358}$ and only 353 digits long. Obviously, $2$ is a common prime divisor. Using the Euclidean algorithm, it is rapidly seen that it is the only one. In order to conclude, we prove as before, using \Singular, that $I = \K[\lambda_1, \dots, \lambda_8, \mu_1, \dots, \mu_8]$ for any field~$\K$ of characteristic $2$, where $I$ is the ideal of $\K [\lambda_1 , \dots , \lambda_8, \mu_1, \dots, \mu_8]$ generated by the polynomials $f_k$ for $k=1$, \dots, $224$. This is done in \Singular\ by using
\begin{Verbatim}[numbers=left]
ring r=2,(x(1..8),y(1..8)),dp;
\end{Verbatim}
in line 1 of the code in Figure~\ref{Code}. Again, explicit linear combinations are given in the arXiv version of this paper as an ancillary file.
\end{proof}

\begin{corollary}\label{Cor Degree Two}
If $\V$ is a variety of non-associative algebras over an infinite field whose representations are representable, then $\V$ is either a subvariety of the variety of commutative algebras, or a subvariety of the variety of anticommutative algebras.
\end{corollary}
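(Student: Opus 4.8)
The plan is simply to assemble the machinery already in place. Given a variety $\V$ of non-associative algebras over an infinite field $\K$ whose representations are representable, I would first split according to the characteristic of $\K$: when $\kar(\K)=0$ apply Proposition~\ref{two degree char 0}, and when $\kar(\K)$ is a prime apply Proposition~\ref{thm:PrimeCharCase}. Either way, $\V$ satisfies a non-trivial identity of degree two.

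Next I would extract from this a non-trivial \emph{homogeneous} identity of degree two. Since $\K$ is infinite, Theorem~\ref{Theorem Homogeneity} guarantees that each homogeneous component of the identity just produced is again an identity of $\V$. The possible components have degree one or degree two; if the degree-one component were a non-trivial identity, then $\V$ would be the zero variety, which is a subvariety both of the commutative and of the anticommutative algebras and we are done, so we may assume that the degree-two component is non-trivial. (If one prefers to read Propositions~\ref{two degree char 0} and~\ref{thm:PrimeCharCase} as already yielding a homogeneous degree-two identity, this paragraph may be skipped.)

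Finally, Proposition~\ref{Comm or Anticomm} states precisely that a variety of non-associative $\K$-algebras satisfying a non-trivial homogeneous identity of degree two is a subvariety either of the variety of commutative algebras or of the variety of anticommutative algebras, which is the assertion. There is no genuine obstacle at this stage: the whole weight of the corollary rests on the computer-assisted Propositions~\ref{two degree char 0} and~\ref{thm:PrimeCharCase}, while the present deduction is purely formal, the only point requiring a line of care being the reduction to a homogeneous identity via Theorem~\ref{Theorem Homogeneity}.
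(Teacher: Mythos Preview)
Your proposal is correct and matches the paper's own proof, which simply reads ``Combine the above with Proposition~\ref{Comm or Anticomm}.'' Your extra paragraph on extracting a homogeneous degree-two component via Theorem~\ref{Theorem Homogeneity} is just a spelling out of what the paper already noted after that theorem, so the approaches are essentially identical.
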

\begin{proof}
Combine the above with Proposition~\ref{Comm or Anticomm}.	
\end{proof}

\section{Anticommutativity and the Jacobi identity}\label{Anticommutativity}
The last proposition of the previous section gives us two identities: commutativity and anticommutativity. The goal of this section is to show that representability of representations rules out the first case. Next, we will prove that from anticommutativity and the $\lambda/\mu$-rules (which are both consequences of representability of representations), we can deduce that if the variety is non-abelian, then it has to satisfy the Jacobi identity.

\begin{proposition}\label{AR+comm=abelian}
Let $\V$ be a variety of non-associative algebras over an infinite field of characteristic different from $2$. If $\V$ is a variety of commutative algebras whose representations are representable, then it is an abelian variety.
\end{proposition}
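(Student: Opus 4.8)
The plan is to argue by contradiction: suppose $\V$ is a variety of commutative algebras with representable representations over an infinite field with $\kar(\K)\neq 2$, and that $\V$ is \emph{not} abelian. If $\V$ satisfied a nontrivial identity of degree at most two other than commutativity, then Proposition~\ref{Comm or Anticomm} together with $\kar(\K)\neq 2$ would already force $\V$ to be abelian; so I may assume commutativity is, up to consequence, the only such identity. By Theorem~\ref{RR to AC}, $\V$ is algebraically coherent, hence the $\lambda$-rule holds; rewriting its right-hand side using commutativity collapses the eight monomials into the two types $y(zx)$ and $x(yz)$, giving a single identity
\[
z(xy)=A\,y(zx)+B\,x(yz)
\]
in $\V$, where $A=\lambda_1+\lambda_2+\lambda_3+\lambda_4$ and $B=\lambda_5+\lambda_6+\lambda_7+\lambda_8$; call it $(\ast)$. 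Feeding the triples $(x,y,z)=(b,b',b)$, $(b',b,b)$ and $(b,b,b')$ into $(\ast)$ gives three scalar relations killing $b(bb')$ whose combination yields $2\,b(bb')=0$, so (using $\kar(\K)\neq 2$) $b(bb')=(bb')b=0$ in the free $\V$-algebra on two generators.

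Next I would produce representations from square-zero operators. Let $B^1=\langle b^1\rangle$ and $B^2=\langle b^2\rangle$ be one-dimensional abelian algebras and let $X$ be a finite-dimensional abelian algebra. For any $T_1,T_2\in\gl(X)$ with $T_1^2=T_2^2=0$, setting ${}^{b^i}v=v^{b^i}=T_iv$ defines actions $\xi^i$ of $B^i$ on $X$: the semi-direct products $B^i\ltimes X$ are commutative with every product of three or more elements equal to zero, hence — as the identities of $\V$ are homogeneous and commutativity is its only identity of degree $\leq 2$ — they lie in $\V$, so Lemma~\ref{sdp in V} applies. Since $\V$ has representable representations, restriction gives $\Rep(B^1+B^2,X)\cong\Rep(B^1,X)\times\Rep(B^2,X)$, so there is an action $\xi$ of $B^1+B^2$ on $X$ with ${}^{b^i}v=T_iv$, and $H:=(B^1+B^2)\ltimes_\xi X$ lies in $\V$.

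Now I would extract operator identities by evaluating $(\ast)$ inside $H$. Taking $z=(0,v)$, $x=(b^1,0)$, $y=(b^2,0)$ gives ${}^{b^1b^2}v=(A\,T_2T_1+B\,T_1T_2)v$; repeating with the labels $1,2$ exchanged and using $b^1b^2=b^2b^1$ gives ${}^{b^1b^2}v=(A\,T_1T_2+B\,T_2T_1)v$, whence $(A-B)(T_2T_1-T_1T_2)=0$. Then evaluating $(\ast)$ at $z=(b^1b^2,0)$, $x=(0,v)$, $y=(b^1,0)$, and using $b^1(b^1b^2)=0$ together with $T_1^2=0$, yields $(B-A^2)\,T_1T_2T_1=0$. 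Since these relations hold for \emph{every} square-zero pair $T_1,T_2$, choosing fixed square-zero operators on a two-dimensional $X$ with $T_1T_2\neq T_2T_1$, $T_1T_2T_1\neq 0$ and $T_2T_1\neq 0$ forces $A=B$ and then $A=A^2$, so $A=B\in\{0,1\}$.

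To conclude: if $A=B=0$ then $(\ast)$ is literally the identity $z(xy)=0$; if $A=B=1$, then cyclically permuting $(\ast)$, summing, and using $\kar(\K)\neq 2$ gives first $z(xy)+x(yz)+y(zx)=0$ and then $2z(xy)=0$, so again $z(xy)=0$ holds in $\V$. Thus $\V$ satisfies ``all triple products vanish''. But in $H\in\V$ the triple product $(b^2,0)\bigl((b^1,0)(0,v)\bigr)=(0,T_2T_1v)$ must then be zero, i.e.\ $T_2T_1=0$ for all square-zero $T_1,T_2$ — false for the operators fixed above. This contradiction shows $\V$ is abelian. The step needing the most care is the case analysis on $A$ and $B$: one must notice that the apparently harmless value $A=B=1$ secretly entails the triple-product identity $z(xy)=0$, and one must exhibit the right small square-zero operators so that the derived operator identities genuinely pin the constants down.
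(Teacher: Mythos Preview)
Your argument is essentially correct and takes a genuinely different route from the paper, but one step is mis-stated. The claim that $b(bb')=0$ holds in the \emph{free} $\V$-algebra on two generators is false: for $A=B=-1$ (the commutative Jacobi case) your three substitutions collapse to the single relation $b'(bb)=-2\,b(bb')$, which does not force $b(bb')=0$. What you actually need---and what does hold---is the weaker statement $b^1(b^1b^2)=0$ in the coproduct $B^1+B^2$, where the $B^i$ are one-dimensional \emph{abelian} algebras, so that $b^ib^i=0$ there. With that extra input your three substitutions into $(\ast)$ become $(1-B)\,b^1(b^1b^2)=0$, $(1-A)\,b^1(b^1b^2)=0$ and $(A+B)\,b^1(b^1b^2)=0$; summing the first two and adding the third gives $2\,b^1(b^1b^2)=0$, hence $b^1(b^1b^2)=0$ since $\kar(\K)\neq 2$. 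After this repair the rest of your argument---deriving $A=B$, then $(B-A^2)T_1T_2T_1=0$, then $A\in\{0,1\}$, then $z(xy)=0$, then the final contradiction in $H$---goes through as written.

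By contrast, the paper uses explicit abelian test modules with linearly independent basis elements $z^{ij}$ and $t^{ijk}$: a single degree-three check pins the coefficients down to $\lambda=\mu=-1$, and then a \emph{third} acting algebra $B^3$ together with degree-four elements is needed to eliminate that case. Your approach stays with two acting algebras and abstract square-zero operators throughout; this is more economical (no $B^3$, no $t^{ijk}$), at the cost of a slightly longer chain of degree-three manipulations and the case split $A\in\{0,1\}$.
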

\begin{proof}
First, since $\V$ has representable representations, it is algebraically coherent by Theorem~\ref{RR to AC}. Hence the $\lambda / \mu$-rules hold. By commutativity, these can be rewritten as 
\begin{align}\label{eq co}
x(yz) = \lambda (xy)z + \mu y (xz)
\end{align}
for some $\lambda$, $\mu \in \K$. 

Now we need to use essentially the same algebras and actions as in the proof of Proposition~\ref{two degree char 0}, but without considerations for left and right, so that the actions satisfy the commutativity rule. In other words, for all $b^i$ in $B^i$ and $w$ in $X$ we define the actions such that $bw=wb$. Such actions are well defined, and thus the semi-direct product $(B^1+B^2+B^3)\ltimes X$ is an object of $\V$ by representability of representations. Therefore its elements should satisfy the identity~\eqref{eq co}. Let us check this on $b^1(b^2x)$: indeed, $b^1(b^2x) = \lambda (b^1b^2)x + \mu b^2(b^1x)
= \lambda^2 (xb^1)b^2 +\lambda\mu b^1(xb^2)$, so that $0=(\lambda^2 + \mu) z^{12} + (\lambda\mu -1) z^{21}$. By linear independence of $\{z^{12},z^{21}\}$, either $\V$ is abelian or $\lambda = \mu = -1$.

The second case would mean that the identity~\eqref{eq co} is now
\begin{align}\label{eq co bis}
x(yz) = - (xy)z - y (xz).
\end{align}
Therefore, since $b^1(b^2b^3)$ is an element of the semi-direct product we have that $b^1(b^2b^3)= -(b^1b^2)b^3 - b^2(b^1b^3)$ as an action, and in particular on $x$ this implies
\begin{align*}
(b^1(b^2b^3))x= -((b^1b^2)b^3)x - (b^2(b^1b^3))x.
\end{align*}
Decomposing each element on the left side and on the right side twice by applying~\eqref{eq co bis} gives us $t^{123} + t^{132} + t^{231} + t^{321} = - t^{312} - t^{321} - t^{123} - t^{213} - t^{213} - t^{231} - t^{132} - t^{312}$, which is equivalent to $2t^{312} + 2t^{321} + 2t^{123} + 2t^{213} + 2t^{132} + 2t^{231}= 0$. Since $\kar(\K) \neq 2$ and all these elements are linearly independent, we encounter a contradiction. Hence $\V$ is abelian.
\end{proof}

\begin{remark}
If $\kar(\K)=2$, then commutativity and anticommutativity are the same condition. Therefore, we do not have to care about it to state our next result. However, one should remember that quasi-Lie algebras and Lie algebras do not coincide when the characteristic of the field is $2$.
\end{remark}

\begin{proposition}\label{Jacobi from aco}
Let $\K$ be an infinite field and $\V$ a variety of non-associative algebras over $\K$ that satisfies the $\lambda/\mu$-rules and anticommutativity. If $\V$ has representable representations, then the Jacobi identity holds in $\V$.
\end{proposition}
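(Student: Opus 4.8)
\emph{Step 1: a normal form for the $\lambda/\mu$-rules.} Since $\V$ is anticommutative, every degree-three monomial in three distinct variables $x,y,z$ equals, up to sign, one of $(xy)z$, $(yz)x$, $(zx)y$. Rewriting the $\lambda$-rule (or the $\mu$-rule) of Theorem~\ref{Theorem AC iff Orzech} accordingly, we see that $\V$ satisfies an identity
\begin{align*}
(xy)z=\alpha\,(yz)x+\beta\,(zx)y
\end{align*}
for suitable $\alpha,\beta\in\K$. (If the $\lambda$-rule and the $\mu$-rule reduce to \emph{distinct} identities of this shape, subtracting them gives an identity relating $(yz)x$ and $(zx)y$ only; running the argument of Step~2 on it shows it either forces all ternary products to vanish, or lands us in the degenerate case $(\star)$ below.) Since the Jacobi identity is exactly the instance $\alpha=\beta=-1$, it suffices to force that value or to prove the Jacobi identity another way.

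\emph{Step 2: the $S_3$-symmetry.} Permuting $x,y,z$ in the displayed identity yields further identities of $\V$. The transposition $x\leftrightarrow y$ gives $(xy)z=\beta\,(yz)x+\alpha\,(zx)y$, so $\V$ satisfies $(\alpha-\beta)\bigl((yz)x-(zx)y\bigr)=0$. Hence either $\alpha=\beta$, or $\V$ satisfies $(yz)x=(zx)y$ and therefore, by a cyclic permutation, the identities $(\star)$: $(xy)z=(yz)x=(zx)y$. In the first case, summing the three cyclic versions of $(xy)z=\alpha\bigl((yz)x+(zx)y\bigr)$ gives $(1-2\alpha)\bigl((xy)z+(yz)x+(zx)y\bigr)=0$: if $1-2\alpha\neq0$ (in particular always when $\kar\K=2$) the Jacobi identity follows, while if $\kar\K\neq2$ and $\alpha=\tfrac12$ a further cyclic comparison forces $(\star)$. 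So it remains to treat the case in which $\V$ satisfies $(\star)$, where the goal becomes to show that all ternary products vanish in $\V$ (then the Jacobi identity is trivial).

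\emph{Step 3: excluding $(\star)$ via representability.} As in the proofs of Theorem~\ref{RR to AC} and Proposition~\ref{two degree char 0}, I would take abelian algebras $B^1,B^2,B^3$ on single generators $b^1,b^2,b^3$ and an abelian algebra $X$ with generators including $x$, $p_i$, $q_{ij}$ ($i\neq j$) and $r$, and define actions of $B^i$ on $X$ by ${}^{b^i}x=p_i$, ${}^{b^i}p_i=0$, ${}^{b^i}p_j=q_{ij}$, ${}^{b^i}q_{jk}=\pm r$ when $\{i,j,k\}=\{1,2,3\}$ and $0$ otherwise, ${}^{b^i}r=0$, the signs chosen so that the relation ${}^{b^i}\circ{}^{b^j}=-{}^{b^j}\circ{}^{b^i}$ (forced by $(\star)$ together with $b^ib^j+b^jb^i=0$) holds, and the right actions determined by anticommutativity. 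Every ternary product in $B^i\ltimes X$ vanishes, so Lemma~\ref{sdp in V} shows these are genuine actions; hence representability of representations puts $H=(B^1+B^2+B^3)\ltimes X$ in $\V$, so $H$ satisfies $(\star)$. Evaluating $(\star)$ on $u=(b^1b^2,0)$, $v=(b^3,0)$, $w=(0,x)$—computing ${}^{b^1b^2}(\_)$ inside $H$ as a composite of the operators ${}^{b^i}(\_)$ by a further use of $(\star)$—both $(vw)u$ and $(wu)v$ reduce to $\pm r$ with opposite signs, so $(\star)$ yields $r=-r$; for $\kar\K\neq2$ this contradicts $r$ being a nonzero generator. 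Therefore $\V$ cannot satisfy $(\star)$, and by Step~2 the Jacobi identity holds.

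\emph{The obstacle.} The hard part is characteristic $2$: there anticommutativity is commutativity, the sign cancellation in Step~3 collapses, and the variety $(\star)$ contains the commutative associative algebras, so the clean three-element test above does not suffice. I expect it must be dealt with as the degree-two statements of Section~\ref{Section two degree} were: enlarge $X$ and the actions so that the requirement $H\in\V$ becomes an overdetermined linear system in the structure constants, and verify with a \Singular{} computation that it is consistent only when every ternary product of $\V$ is zero—again making the Jacobi identity automatic.
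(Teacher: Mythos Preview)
Your approach is genuinely different from the paper's. The paper writes the reduced rule as $x(yz)=\lambda(xy)z+\mu y(xz)$, builds the same abelian actions of $B^1,B^2$ on $X$ as in Proposition~\ref{AR+comm=abelian} (corrected for anticommutativity), and simply expands $b^1(b^2x)$ twice using the rule inside $(B^1+B^2)\ltimes X$. Linear independence of the $z^{ij}$ then gives the two equations $\mu=\lambda^{2}$ and $\lambda\mu=1$, whose unique solution $\lambda=\mu=1$ is Jacobi. No case analysis, no separate treatment of $(\star)$, and the argument is uniform in the characteristic. Your $S_{3}$-symmetry reduction in Step~2 is correct and pleasant, but it trades that uniformity for a two-case split.

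The genuine gap is your Step~3 in characteristic~$2$, and it is self-inflicted. You impose the relation $q_{ij}=-q_{ji}$ (and choose signs at the $r$-level) precisely so that the anticommutation ${}^{b^i}\!\circ{}^{b^j}=-{}^{b^j}\!\circ{}^{b^i}$ already holds; this pushes the contradiction up to $r=-r$, which of course dies when $\kar(\K)=2$. But the anticommutation is not something you need to build in: it is a \emph{consequence} of $(\star)$ holding in $H=(B^1+B^2)\ltimes X$, and violating it is exactly the contradiction you want. Take $X$ abelian on independent generators $x,p_1,p_2,q_{12},q_{21}$ with ${}^{b^i}x=p_i$, ${}^{b^i}p_i=0$, ${}^{b^i}p_j=q_{ij}$ ($i\neq j$), ${}^{b^i}q_{jk}=0$, and right action by anticommutativity. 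Each $B^i\ltimes X$ has ${}^{b^i}\!\circ{}^{b^i}=0$, so all ternary products vanish and it lies in $\V$. Representability puts $H$ in $\V$, and evaluating $(\star)$ on $b^1,b^2,x$ gives $(b^2x)b^1=(xb^1)b^2$, i.e.\ $-q_{12}=q_{21}$, contradicting linear independence in every characteristic. No third algebra, no $r$, and no computer are needed; your ``obstacle'' paragraph can be deleted.
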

\begin{proof}
Because of anticommutativity, the $\lambda / \mu$-rules can be rewritten as
\begin{align}\label{eq aco}
x(yz)= \lambda (xy)z + \mu y(xz)
\end{align}
for some $\lambda$, $\mu \in \K$. We consider the actions we defined in the proof of Proposition~\ref{AR+comm=abelian}, corrected for \emph{anti}commutativity. So we decree that an element $b^i$ acting on the left is the same as $b^i$ acting on the right times $-1$. For instance, $b^1y^2=-y^2 b^1$ and $b^2x = -xb^2$. In order to check the identity~\eqref{eq aco} on $b^1(b^2x)$, we compute:
\begin{align*}
b^1(b^2x) =\lambda (b^1b^2)x + \mu b^2(b^1x)
= - \lambda^2 (xb^1)b^2 -\lambda\mu b^1(xb^2) + \mu b^2(b^1x).
\end{align*}
By linear independence, this yields the system $\mu - \lambda^2 =0$, $1-\lambda\mu =0
$ whose solution is $\lambda=\mu =1$. Thus \eqref{eq aco} becomes the Jacobi identity.
\end{proof}

\begin{corollary}\label{corollary subvar of Lie}
Let $\V$ be a variety of non-associative algebras over an infinite field~$\K$. If $\V$ has representable representations, then $\V$ is a subvariety of $\Lie_\K$ or~$\qLie_\K$ if $\kar(\K)=2$, and a subvariety of $\Lie_\K$ otherwise.\noproof
\end{corollary}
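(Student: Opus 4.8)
The plan is to assemble the pieces already proved in the previous sections, so this corollary is essentially a bookkeeping exercise. Suppose $\V$ is a variety of non-associative $\K$-algebras over an infinite field $\K$ with representable representations. First I would invoke Corollary~\ref{Cor Degree Two}, which tells us that $\V$ is a subvariety of the variety of commutative algebras or a subvariety of the variety of anticommutative algebras. I treat these two cases.

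In the commutative case, I split on the characteristic. If $\kar(\K)\neq 2$, Proposition~\ref{AR+comm=abelian} applies and forces $\V$ to be abelian; since an abelian variety is (trivially) a subvariety of $\Lie_\K$ (the Jacobi identity and anticommutativity both hold vacuously when $xy=0$), we are done. If $\kar(\K)=2$, then commutativity and anticommutativity coincide, so this case collapses into the anticommutative one (as noted in the remark following Proposition~\ref{AR+comm=abelian}), and I handle it there.

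In the anticommutative case, I use two facts already established under the hypothesis of representable representations: Theorem~\ref{RR to AC} gives that $\V$ is algebraically coherent, hence (by Theorem~\ref{Theorem AC iff Orzech}) satisfies the $\lambda/\mu$-rules; and anticommutativity is now in force. Proposition~\ref{Jacobi from aco} then yields that the Jacobi identity holds in $\V$. Thus $\V$ satisfies both anticommutativity and the Jacobi identity, so $\V\subseteq\qLie_\K$. When $\kar(\K)\neq 2$, $\qLie_\K=\Lie_\K$, so $\V\subseteq\Lie_\K$; when $\kar(\K)=2$, we only get $\V\subseteq\qLie_\K$, which is exactly the weaker conclusion stated. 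Combining the cases gives the claim.

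There is no serious obstacle here: the only point requiring a moment's care is the characteristic-$2$ bookkeeping—making sure the commutative branch is correctly folded into the anticommutative branch so that no case is lost and the conclusion is stated with the right variety ($\Lie_\K$ versus $\qLie_\K$). Everything else is a direct citation of the results of Sections~\ref{Section AC}--\ref{Anticommutativity}.
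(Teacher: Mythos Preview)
Your proposal is correct and matches the paper's approach: the paper in fact gives no proof at all (the statement is marked \verb|\noproof|), treating the corollary as an immediate consequence of Corollary~\ref{Cor Degree Two}, Proposition~\ref{AR+comm=abelian}, the remark on characteristic~$2$, and Proposition~\ref{Jacobi from aco}. Your argument is exactly the natural unpacking of this chain of citations, with the characteristic-$2$ bookkeeping handled correctly.
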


\section{What about subvarieties?}\label{Subvarieties}

We concluded the previous section by saying that the only potential identities of degree two for varieties with representable representations are $xy=0$, $xy=-yx$ and $xx=0$, and that the Jacobi identity necessarily holds. The question we answer in this section is: ``Did we miss any other identities?'' As we have already explained above, such identities would have to be of degree at least~$3$. The next proposition proves that we cannot add any non-trivial identities without making the variety abelian.

\begin{proposition}\label{ban subvar}
Let $\K$ be an infinite field and $\V$ be a subvariety of $\Lie_\K$ or $\qLie_\K$ determined by a collection of identities of degree 3 or higher. Then $\V$ is an abelian variety if and only if it has representable representations.
\end{proposition}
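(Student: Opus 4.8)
The backward implication is immediate: an abelian variety is (equivalent to) $\Vect_\K$, an abelian category, in which $B\flat X$ is the kernel of the projection $B\oplus X\to B$, so $B\flat X\cong X$ with $\eta^B_X$ the identity; the only action of $B$ on $X$ is then the identity of $X$, the functor $\Rep(\_,X)$ is constantly a singleton, and it is represented by the zero object $[X]=0$. For the forward implication I would argue by contradiction: assume $\V$ has representable representations but is not abelian, and treat first the case $\V\subseteq\Lie_\K$ (the case $\V\subseteq\qLie_\K$ with $\kar(\K)=2$ is entirely analogous, and identical when $\kar(\K)\neq 2$). By Theorem~\ref{Theorem Linearity General}, $\V$ satisfies a nontrivial \emph{multilinear} identity $w(x_1,\dots,x_n)=0$, and since $\V$ is not abelian we must have $n\geq 3$: a nontrivial multilinear identity of degree $1$ yields the trivial variety, and one of degree $2$, necessarily a nonzero multiple of $[x_1,x_2]$, yields abelianness. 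Expressing the multilinear component of the free Lie algebra in its left-normed basis, $w=\sum_{\sigma}c_\sigma\,[[\cdots[x_1,x_{\sigma(2)}],x_{\sigma(3)}],\dots,x_{\sigma(n)}]$ with $\sigma$ ranging over the bijections of $\{2,\dots,n\}$, and $w\neq 0$ forces $c_{\sigma_0}\neq 0$ for some $\sigma_0$.

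The plan is then to contradict the property $\Rep(B^1+\cdots+B^{n-1},X)\cong\prod_{i}\Rep(B^i,X)$ forced by representability. Take $X=\K^n$ as an abelian algebra and $B^i=\K b^i$ as one-dimensional abelian algebras ($i=1,\dots,n-1$), all clearly in $\V$. I would choose the endomorphisms $\psi_i\in\gl(X)=M_n(\K)$ giving the right action of $b^i$ on $X$ (so that $[v,b^i]=\psi_i(v)$ for $v\in X$ in the semi-direct product) to be elementary matrices $E_{k,k+1}$, assigned to the $\psi_i$ in such a way that the composite $\psi_{\sigma_0(n)-1}\circ\cdots\circ\psi_{\sigma_0(2)-1}$ telescopes to $E_{1,n}$, while $\psi_{\sigma(n)-1}\circ\cdots\circ\psi_{\sigma(2)-1}=0$ for every $\sigma\neq\sigma_0$ (a product of the distinct matrix units $E_{1,2},\dots,E_{n-1,n}$ taken in any order but the staircase one vanishes). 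The crucial observation is that $\psi_i^2=0$, so each $B^i\ltimes_{\psi_i}X$ is a Lie algebra nilpotent of class at most $2$, in which every Lie monomial of degree $\geq 3$ vanishes; as the identities cutting out $\V$ inside $\Lie_\K$ all have degree $\geq 3$, such an algebra lies in $\V$, and hence by Lemma~\ref{sdp in V} each $\psi_i$ defines a genuine $\V$-action of $B^i$ on $X$. Representability then forces $H:=(B^1+\cdots+B^{n-1})\ltimes X$ to lie in $\V$, hence to satisfy $w=0$.

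But it does not. Substituting $x_1\mapsto v\in X$ and $x_j\mapsto b^{j-1}$ in $w$ and using that $X$ is an abelian ideal of $H$ on which $b^i$ acts by $\psi_i$, each left-normed bracket collapses—one argument at a time, the intermediate results staying inside $X$, so that the $b^i$--$b^j$ brackets internal to the coproduct never enter—to $(\psi_{\sigma(n)-1}\circ\cdots\circ\psi_{\sigma(2)-1})(v)$, whence $w(v,b^1,\dots,b^{n-1})=c_{\sigma_0}E_{1,n}(v)$, which is nonzero for $v$ the last standard basis vector of $X$. This contradicts $H\in\V$, so $\V$ must be abelian.

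I expect the substance of the argument to lie in two routine but delicate points. The first is verifying the displayed evaluation: that a left-normed Lie word, once its first argument lands in the abelian ideal $X$, reduces under the remaining right bracketings to an iterated composite of the $\psi_i$ applied to $v$, with a scalar that is the same for every $\sigma$ (so the coefficients $c_\sigma$ transfer unchanged), and that the nontrivial $b^i$--$b^j$ part of the coproduct $B^1+\cdots+B^{n-1}$ is irrelevant here. The second is the combinatorics of the assignment $b^i\mapsto\psi_i$, namely that the staircase product $E_{1,2}\circ E_{2,3}\circ\cdots\circ E_{n-1,n}$ is the unique nonzero ordered product of $\{E_{1,2},\dots,E_{n-1,n}\}$, so that exactly one monomial of $w$ survives in $H$. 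Beyond that, the only extra care is the bookkeeping required to push the $\kar(\K)=2$ quasi-Lie case through the same construction, once one has the corresponding reduction to a multilinear identity.
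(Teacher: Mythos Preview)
Your argument is correct and follows the same high-level strategy as the paper---construct individual representations of one-dimensional abelian algebras $B^i$ on an abelian $X$ that automatically lie in $\V$ because $B^i\ltimes X$ is nilpotent of class~$2$, invoke representability to obtain a representation of the coproduct, and then evaluate the offending identity to reach a contradiction---but the concrete implementation is genuinely different.

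The paper proceeds by \emph{induction on the degree}: it first recycles Proposition~\ref{Jacobi from aco} to exclude extra degree-$3$ identities, and for the inductive step rewrites a multilinear identity of degree $n+1$ as $\sum_i\lambda_i\varphi_i(x_1,\dots,x_ix_0,\dots,x_n)$; the abelian module $X$ is the ``word-recording'' algebra from Proposition~\ref{two degree char 0}, enlarged to depth $n$, and the linear-independence claim for the $\varphi_i(b^1,\dots,b^ix,\dots,b^n)$ is asserted ``by construction''. Your route is direct rather than inductive: you express the multilinear identity once and for all in the left-normed basis $\sum_\sigma c_\sigma[[\cdots[x_1,x_{\sigma(2)}],\dots],x_{\sigma(n)}]$ of the free Lie algebra, and pick the elementary matrices $E_{k,k+1}$ in $\gl(\K^n)$ so that the staircase product singles out the monomial indexed by~$\sigma_0$. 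This buys you a shorter, more explicit argument with an entirely transparent linear-independence step (the unique nonvanishing ordered product of $\{E_{1,2},\dots,E_{n-1,n}\}$), at the cost of invoking a standard fact about the left-normed basis that the paper does not need. Your observation that only the generators $b^i$, and never their products in the $\V$-coproduct, enter the evaluation is exactly the point that makes the calculation go through; the paper's inductive bookkeeping accomplishes the same thing less visibly.

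One small remark: you should state explicitly that the hypothesis ``determined by identities of degree $3$ or higher'' is being read as \emph{nontrivial} (so that $\V$ is a proper subvariety), since otherwise there is no identity to feed into Theorem~\ref{Theorem Linearity General}; the paper's proof makes the same tacit reading.
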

\begin{proof}
Let $\V$ be a non-abelian variety of $\K$-algebras with representable representations. In the previous sections it was explained that there are no identities of degree two besides $xy=-yx$ or $xx=0$, and that the Jacobi identity holds. The idea here is to first prove that there is no identity of degree three besides Jacobi, and then we show that no other identities (of degree $n > 3$) can hold.

First, suppose that $\V$ satisfies some degree three identities. Because of anticommutativity and Theorem~\ref{Theorem Linearity General}, we may assume this identity to be $x(yz)= \lambda (xy)z + \mu y(xz)$ for certain $\lambda$, $\mu \in \K$. Then we may recycle the ideas of the proof of Proposition~\ref{Jacobi from aco} to reduce it to the Jacobi identity.

Now, the goal is to prove by induction that the existence of an identity of degree $n>3$ is in contradiction with Theorem~\ref{Theorem Linearity General}. Let us assume that no other identities of degree lower than or equal to $n>3$ are satisfied. Let $\psi$ be a homogeneous identity of degree $n+1$ with $n>3$ which holds in $\V$. We consider a non-trivial multilinear consequence $\varphi(x_1 ,\dots , x_n,x_0)=0$ of $\psi$, whose existence is assured by Theorem~\ref{Theorem Linearity General}. Using anticommutativity and the Jacobi identity, we can rewrite $\varphi$ in the shape $0= \sum_{i=1} \lambda_i \varphi_i (x_1 , \dots, x_i x_0, \dots, x_n)$. We observe that if $\lambda_i=0$ for all $i$, then $\varphi$ is just a consequence of $xy=-yx$ and $x(yz)+y(zx)+z(xy)=0$, or in other words a trivial identity. Note that it is impossible to be a consequence of other identities by the induction hypothesis. Proving that the $\lambda_i$'s are zero will then bring us to a contradiction.

We consider the same actions we used in Proposition~\ref{Jacobi from aco} but extended to $n$ algebras $B^i$ acting on $X$ (which is also enlarged to more elements). Those are again well defined and thus we have that $(B^1 + \dots + B^n) \ltimes X$ lies in $\V$ as already explained. Therefore the element $\psi((b^1,0),(b^2,0), \dots, (b^n,0),(0, x))=\psi(b^1, \dots, b^n,x)$ of this semi-direct product has to vanish and the multilinear consequence $\varphi$ as well, which allows us to write $0= \sum_{i=1} \lambda_i \varphi_i (b^1 , \dots, b^i x, \dots, b^n)$. Again, by construction of the actions, the elements $\varphi_i (b^1 , \dots, b^i x, \dots, b^n)$ of $X$ are linearly independent and thus $\lambda_i=0$ for all $i$---which completes the proof.
\end{proof}

\section{Conclusion and final remarks}\label{Conclusion}

We can now conclude with the main result of this article:

\begin{theorem}\label{AR implies Lie}
Let $\K$ be an infinite field. Let $\V$ be a variety of non-associative algebras over $\K$. If $\V$ has representable representations---which happens, for instance, when it is action representable---then $\V$ is either the variety of Lie algebras~$\Lie_\K$, the variety of quasi-Lie algebras $\qLie_\K$, or the category of vector spaces $\Vect_{\K}$.
\end{theorem}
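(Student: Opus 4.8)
The plan is to assemble the pieces established in Sections~\ref{Section AC}--\ref{Subvarieties} into the final dichotomy. Suppose $\V$ is a variety of non-associative $\K$-algebras over an infinite field $\K$ which has representable representations; in particular this covers the case where $\V$ is action representable, since action representability trivially implies representability of representations. If $\V$ is abelian, then it is equivalent to $\Vect_\K$ and we are done, so assume from now on that $\V$ is non-abelian.

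First I would recall that by Theorem~\ref{RR to AC} the variety $\V$ is algebraically coherent, hence satisfies the $\lambda/\mu$-rules. By Corollary~\ref{Cor Degree Two} (which combines Propositions~\ref{two degree char 0} and~\ref{thm:PrimeCharCase} with Proposition~\ref{Comm or Anticomm}), $\V$ is a subvariety either of the variety of commutative algebras or of the variety of anticommutative algebras. In the commutative case, Proposition~\ref{AR+comm=abelian} forces $\V$ to be abelian when $\kar(\K)\neq 2$, contradicting our standing assumption; when $\kar(\K)=2$ commutativity and anticommutativity coincide, so in every remaining case $\V$ is anticommutative. Then Proposition~\ref{Jacobi from aco} shows that the Jacobi identity holds in $\V$, so by Corollary~\ref{corollary subvar of Lie} the variety $\V$ is a subvariety of $\qLie_\K$, and of $\Lie_\K$ when $\kar(\K)\neq 2$.

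It remains to show that $\V$ is in fact all of $\qLie_\K$ (resp.\ $\Lie_\K$), i.e.\ that no further identities cut it down to a proper non-abelian subvariety. This is exactly Proposition~\ref{ban subvar}: any proper subvariety of $\qLie_\K$ or $\Lie_\K$ carved out by identities of degree $3$ or higher that still has representable representations must be abelian. Since we assumed $\V$ non-abelian, $\V$ cannot satisfy any such extra identity, so $\V=\Lie_\K$ or $\V=\qLie_\K$ according to the characteristic. This completes the case analysis: $\V$ is one of $\Vect_\K$, $\qLie_\K$, $\Lie_\K$.

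The conceptually substantive steps were all done earlier---the reduction to algebraic coherence (Theorem~\ref{RR to AC}), the computer-assisted elimination of degree-two possibilities other than (anti)commutativity (Propositions~\ref{two degree char 0} and~\ref{thm:PrimeCharCase}), the elimination of commutativity (Proposition~\ref{AR+comm=abelian}), the derivation of Jacobi (Proposition~\ref{Jacobi from aco}), and the rigidity of $\qLie_\K$ among its subvarieties (Proposition~\ref{ban subvar}). So the only real care needed in the proof of Theorem~\ref{AR implies Lie} itself is bookkeeping around the characteristic~$2$ case---making sure that the ``commutative branch'' is genuinely subsumed by the ``anticommutative branch'' there, and that the final output correctly distinguishes $\Lie_\K$ from $\qLie_\K$. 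I expect that bookkeeping, rather than any new mathematics, to be the main thing to get right.
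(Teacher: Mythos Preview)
Your proposal is correct and follows the same route as the paper, which simply states that the theorem is a direct consequence of Corollary~\ref{corollary subvar of Lie} and Proposition~\ref{ban subvar}; you have merely unpacked the chain of earlier results leading to these two statements. The bookkeeping around characteristic~$2$ that you flag is indeed the only thing needing care, and your handling of it matches the paper's.
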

\begin{proof}
This is a direct consequence of Corollary~\ref{corollary subvar of Lie} and Proposition~\ref{ban subvar}.
\end{proof}

In \cite{GM-VdL3}, the first and third authors of the current article gave a different and \emph{a~priori} unrelated categorical description of Lie algebras, through a condition called \emph{local algebraic cartesian closedness} \LACC\ introduced by J.~R.~A.~Gray~\cite{Gray2012, GrayLie, GM-G}. Let us recall the main result:

\begin{theorem}\label{LACC implies Lie}
Let $\K$ be an infinite field and $\V$ a variety of $n$-algebras over $\K$ which is a non-abelian \emph{locally algebraically cartesian closed} category. Then $n = 2$ and
\begin{enumerate}
\item if $\kar(\K)\neq 2$, then $\V = \Liek = \qLiek$;
\item if $\kar(\K)= 2$, then $\V = \Liek$ or $\V = \qLiek$.\noproof
\end{enumerate}
\end{theorem}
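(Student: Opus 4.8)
The plan is to reduce local algebraic cartesian closedness to the condition studied throughout this paper, and then to pin down the arity $n$.

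First, it is well known that \LACC\ implies action representability (see~\cite{Gray2012}), which in turn implies representable representations, as recorded in the statement of Theorem~\ref{AR implies Lie}. So a non-abelian \LACC\ variety $\V$ of $n$-algebras over the infinite field $\K$ has representable representations, and everything proved above about such varieties is available. In particular, if $n=2$ then $\V$ is a variety of non-associative $\K$-algebras, so Theorem~\ref{AR implies Lie} gives $\V\in\{\Liek,\qLiek,\Vect_{\K}\}$; since $\Vect_{\K}$ is abelian and $\V$ is not, $\V=\Liek$ or $\V=\qLiek$, which is case~(1) when $\kar(\K)\ne 2$ (the two varieties then coinciding) and case~(2) when $\kar(\K)=2$. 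It therefore remains to rule out $n=1$ and $n\ge 3$.

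The case $n=1$ is easy: a variety of $1$-algebras is governed by the $\K[t]$-ideal of all polynomials $p$ for which $p(\delta)=0$ holds, $\delta$ being the (linear) unary operation; this ideal is principal, say $(p_{0})$, so $\V$ is equivalent to the category of $\K[t]/(p_{0})$-modules --- or of $\K[t]$-modules when $p_{0}=0$ --- hence abelian, contradicting the hypothesis. So $n\ne 1$.

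The case $n\ge 3$ is the main obstacle, and here the idea is to adapt to the $n$-ary operation the machinery of Sections~\ref{Section AC} and~\ref{Section two degree}. An argument parallel to the proof of Theorem~\ref{RR to AC}, run with the $n$-ary bracket in place of the binary one, produces a ``coherence identity'' that rewrites a doubly nested $n$-ary product as a $\K$-linear combination of $n$-ary products with a single fixed bracketing. Then, as in Proposition~\ref{two degree char 0} and in the spirit of Proposition~\ref{Ass not AR}, one builds a finite-dimensional abelian $n$-algebra $X$ whose basis records all the monomials produced by iterating actions twice, together with abelian one-dimensional algebras $B^{1},\dots,B^{m}$ acting on $X$, where $m$ is large enough to separate these monomials (the value $m=2n-1$ of the binary case serving as a guide); every product of $n$ elements in a semi-direct product $B^{i}\ltimes X$ vanishes, so the actions are automatically well defined. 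Representability of representations then forces $(B^{1}+\cdots+B^{m})\ltimes X$ into $\V$, so the coherence identity must hold on its elements, and evaluating it on suitable linearly independent elements of $X$ turns it into a homogeneous linear system on the structure constants of the identity whose only solution makes the $n$-ary operation identically zero. Hence $\V$ would be abelian, a contradiction, so $n=2$ and the theorem follows. As in Section~\ref{Section two degree}, the genuinely delicate point --- the one I expect to be hardest --- is checking that the monomials used to read off the linear system are really linearly independent in $X$ and that the system admits no nonzero solution; in the binary case this was handled by the computer-assisted computations behind Figure~\ref{Code}, and an analogous, arity-dependent verification would be needed here.
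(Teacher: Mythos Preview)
Your reduction strategy fails at the very first step. You write that ``it is well known that \LACC\ implies action representability (see~\cite{Gray2012})'', but this implication is \emph{not} known; in fact, the paper explicitly records it as an open problem in the discussion following Theorem~\ref{LACC implies Lie} (the ``First question'' paragraph). What \cite{Gray2012} shows is rather that the converse can fail: $\Bool$ is action representable yet not \LACC. So you cannot bootstrap from \LACC\ to representable representations and then invoke Theorem~\ref{AR implies Lie}.

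Note also that this theorem is not proved in the present paper at all: it is quoted from~\cite{GM-VdL3} (hence the \texttt{\textbackslash noproof}). The argument there works directly with the \LACC\ condition---which gives, for each $X$, a right adjoint to $(\_)\flat X$---and extracts from this adjunction its own inconsistent polynomial system (of $128$ equations, as mentioned in the ``Second question'' paragraph), together with a separate treatment of the arity~$n$. Your sketch for $n\geq 3$ is therefore not an adaptation of anything in this paper, and even in~\cite{GM-VdL3} the exclusion of higher arities is handled by a dedicated argument rather than by the hand-waved ``coherence identity plus linear independence'' scheme you outline. In short, the proposal rests on an implication that is currently open, and the remainder is speculative rather than a proof.
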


This naturally leads to a number of questions which we hope to investigate in the future. 

\subsection{First question}
Theorem~\ref{AR implies Lie} together with Theorem~\ref{LACC implies Lie} tell us that, for non-associative algebras over an infinite field, action representability, representability of representations and \LACC\ are equivalent conditions. On the other hand, for arbitrary semi-abelian categories, this is known to be false in general. Indeed, the category $\Bool$ of Boolean rings is action representable~\cite{BJK2} but does not satisfy \LACC, as explained in Proposition~6.4 of~\cite{Gray2012}. Today it remains an open problem whether local algebraic cartesian closedness implies action representability/representability of representations, or if some counterexample exists.

\begin{remark}
We observe that $\Bool$ can be seen as the variety of non-associative $\Z_2$-algebras satisfying $xx=x$. The fact that this category is action representable emphasises the necessity in Theorem~\ref{AR implies Lie} of working with an \emph{infinite} field.
\end{remark}

\subsection{Second question}
In order to obtain Theorem~\ref{LACC implies Lie}, the authors of~\cite{GM-VdL3} used a different inconsistent system of polynomial equations. Ours has 224 and may be reduced to a smaller system. Their system consists of 128 polynomials (and can in fact be further reduced as well). Whence, once again, a main result we were not able to prove without computer assistance. This makes us wonder whether a different proof technique exists, preferably a less computationally involved one.

\subsection{Third question}
What if instead of \emph{one} multiplication, algebras have \emph{two}? This is a natural question to ask, since \emph{Poisson algebras} are an important example of this kind of object. Moreover, Poisson algebras over a fixed field form an \emph{Orzech category of interest}, which simplifies the description of actions in this variety. The problem is that, for a given Poisson algebra $X$, the Lie algebra $\Der(X)$ of derivations on $X$ need not form a Poisson algebra in general. Another potentially interesting example is the variety of \emph{Lie--Leibniz algebras} introduced in~\cite{CDL-LL}.

Therefore, we may ask the following questions: ``Are there action representable varieties of non-associative algebras with two multiplications? If not, can we prove this, and generalise the result to $n\geq 2$ multiplications?'' This might either result in a natural, categorical definition of ``Lie algebras with two multiplications'', or in yet another uniqueness result for classical Lie algebras.

\subsection{Fourth question}
What is the scope of representability of representations outside the context of algebras over a field? For instance, how far is it from action representability? What are examples of this condition, what are its consequences, does it admit any interesting characterisations?

\section*{Acknowledgements}
We are grateful to the \emph{Institut de Recherche en Mathématique et Physique} IRMP and the \emph{Department of Mathematics} for their kind hospitality during our stays in Louvain-la-Neuve and in Santiago de Compostela.


\end{document}